\newtheorem{defn}{Definition}[section]
\newtheorem{lemma}[defn]{Lemma}
\newtheorem{ex}[defn]{Example}
\newtheorem{thm}[defn]{Theorem}
\newtheorem{prop}[defn]{Proposition}
\newtheorem{cor}[defn]{Corollary}
\newtheorem{conj}[defn]{Conjecture}
\newtheorem*{conj1}{Conjecture (M1)}
\newtheorem*{conj2}{Conjecture (M2)}
\theoremstyle{remark}
\newtheorem{rem}[defn]{Remark}
\newcommand{\h}{{\cal H}}
\newcommand{\mc}{\mathbb C}
\def\dom{{\cal D}}
\def\<{\langle}
\def\>{\rangle}
\newcommand{\ip}[2]{
	\< #1,#2 \>}
\definecolor{darkviolet}{rgb}{0.58,0,0.83} 
\definecolor{cel}{RGB}{0,140,20}
\title{Weighted frames, weighted lower semi frames and unconditionally convergent multipliers}
\author{P. Balazs$^{a,1}$, R. Corso$^{b,2}$,  D. Stoeva$^{c,3}$}
\date{%
	{\small 
	$^a$Acoustics Research Institute, Austrian Academy of Sciences, Wohllebengasse 12-14, 1040 Vienna, Austria\\%
	$^b$Dipartimento di Matematica e Informatica, Universit\`{a} degli Studi di Palermo, Via Archirafi 34, \\90123 Palermo, Italy\\
	$^c$Faculty of Mathematics, University of Vienna, Oskar-Morgenstern-Platz 1, 1090 Vienna, Austria
	$^1$peter.balazs@oeaw.ac.at, $^2$rosario.corso02@unipa.it, $^3$diana.stoeva@univie.ac.at \\
	}
	\vspace*{0.3cm }
	October 21, 2023
	}
\begin{document}
\maketitle \pagestyle{myheadings}

\begin{abstract} 
	In this paper we ask when it is possible to transform a given sequence into a frame or a lower semi frame by multiplying the elements by numbers. In other words, we ask when a given sequence is a weighted frame or a weighted lower semi frame and for each case we formulate a conjecture. We determine several conditions under which these conjectures are true. Finally, we prove an equivalence between two older conjectures, the first one being that any unconditionally convergent multiplier can be written as a multiplier of Bessel sequences by shifting of weights,  and the second one that every unconditionally convergent multiplier which is invertible can be written as a  multiplier of frames by shifting of  weights. We also show that these conjectures are also related to one of the newly posed conjectures. 
	
\end{abstract}

\noindent {\bf Keywords:} weighted frames, weighted lower semi frames, multipliers, unconditionally convergent multipliers.

\section{Introduction}

Let $\h$ be an infinite dimensional separable Hilbert space with inner product $\ip{\cdot}{\cdot}$ and norm $\|\cdot\|$. Throughout the paper we  use $\mathbb{N}$ as an index set, also implicitly. 
Let $(f_n)$ be a sequence with elements of $\h$. Recall that $(f_n)$ is called a {\it frame for $\h$} if there exist positive constants $A$ and $B$ such that
\begin{equation}\label{framedef}
A\|f\|^2 \leq \sum_{n} |\ip{f}{f_n}|^2\leq B\|f\|^2, \qquad \forall f\in \h.  
\end{equation}
Frames have been introduced by Duffin and  Schaffer \cite{DS} and they provide reconstruction formulas on the entire Hilbert space. More precisely, if $(f_n)$ is a frame for $\h$, then there exists a frame $(g_n)$ for $\h$ such that
$$
f=\sum_n \ip{f}{f_n}g_n=\sum_n \ip{f}{g_n}f_n, \qquad \forall f\in \h, 
$$
see \cite{DS}. 
Separate consideration of the inequalities in (\ref{framedef}) has also been of interest. 
The sequence $(f_n)$ is a {\it Bessel sequence in $\h$} with  Bessel bound $B\in(0,\infty)$ if the upper inequality in (\ref{framedef}) holds for every $f\in\h$. 
For more on frames and Bessel sequences, we refer e.g. to the books \cite{ole1n,heil}. 
The sequence $(f_n)$ is called a {\it lower semi frame for $\h$} with bound $A>0$ if the lower inequality in (\ref{framedef}) holds for every $f\in\h$. 
A lower semi frame $\mathcal{F}=(f_n)$
for $\h$ provides a reconstruction formula 
of the type  
\begin{equation}\label{reprlf}
f=\sum_n \ip{f}{f_n}g_n, \qquad \forall f\in \dom(C_\mathcal{F}), 
\end{equation}
where $(g_n)$ is some Bessel sequence in $\h$ and $\dom(C_\mathcal{F})$ is the domain of the analysis operator\footnote{See Section \ref{sec2} for the definition of the analysis operator.} of $(f_n)$. This result dates back to \cite{CCLL02}. 
Contrary to the frame case, where reconstruction of all the elements of the Hilbert space is guaranteed, for certain lower semi frames 
a reconstruction formula may hold only on a strict subset of $\h$, 
see \cite{CCLL02,S05}. For more on lower frame sequences, we refer to \cite{AB,AB2,CCLL02,Corso_seq,Corso_seq2,S05,Sd}. 
For the purpose of a notion symmetrical to the lower semi frame one, an upper semi-frame for $\h$ has been introduced \cite{AB} to be a Bessel sequence in $\h$ that is furthermore complete in $\h$.

\vspace{.05in}
 This paper is centered on the following notions. 

\begin{defn}
	\label{def_wf}
	A sequence $(f_n)$ in $\h$ is called a weighted frame\footnote{Weighted frames were previously introduced and studied in \cite{Jacques,BVAJM,xxljpa1}. The notion is also connected to  scalable frames \cite{kopt}.} (resp. weighted lower semi frame) for $\h$ if there exists $(\omega_n)\subset (0,+\infty)$ such that $(\omega_n f_n)$ is a frame (resp. lower semi frame) for $\h$. 
\end{defn}

Specifically, in this paper we study the following problems

\begin{enumerate}
	\item[P1)] Under which conditions is a given sequence a weighted lower semi frame for $\h$?
	\item[P2)] Under which conditions is a given sequence a weighted frame for $\h$?
\end{enumerate}

Further introductory definitions and results are given in Sect. \ref{sec2}. Weighted lower semi frames and Problem P1 are studied in Sect. \ref{sec3}. Similarly, we dedicate Sect. \ref{sec4} to weighted frames and to Problem P2. 
At the end of Sect. \ref{sec4} we relate the topic to unconditionally convergent multipliers. In particular, we establish the equivalence between two conjectures put forward in \cite{uncconv2011}. 
	 We show the surprising fact that if it is true that any unconditionally convergent  invertible multiplier is a multiplier of frames by shifts of weights, then it is also true that any unconditionally convergent multiplier is a multiplier of Bessel sequences by shifts of weights.

\section{Preliminaries}
\label{sec2}

Given an operator $T$ on a vector space we write $\mathcal{D}(T)$ for its domain  and $R(T)$ for its range. As usual, the symbol $\mathcal{B}(\h)$ stands for the set of all bounded operators with domain $\h$ and $GL(\h)$ denotes the set of all  bounded bijective operators 
from $\h$ onto $\h$. 
A sequence $(f_n)$ of elements of $\h$ is {\it complete in $\h$} if its linear span is dense in $\h$. A sequence $(f_n)$ in $\h$ is called {\it minimal} if $f_j \notin$ $\overline{\text{span}}(f_n)_{n \neq j}$ for all $j$. As it is well known, $(f_n)$ is minimal if and only if it is {\it biorthogonal} to some sequence $(g_n)$, i.e. $\ip{f_n}{g_m}=\delta_{n,m}$; a minimal sequence that is complete in $\h$ has a unique biorthogonal in $\h$.
Throughout the paper, we will make use of two operators associated to a sequence $\mathcal{F}:=(f_n)$. The {\it analysis operator} $C_\mathcal{F} :\dom(C_\mathcal{F} )\subseteq \h\to \ell^2$ is given by 
$\dom(C_\mathcal{F})=\{f\in \h: \sum_{n} |\ip{f}{f_n}|^2<\infty \}$ and 
$C_\mathcal{F} f=(\ip{f}{f_n})$, for 
$f\in\dom(C_\mathcal{F})$. The {\it synthesis operator} $D_\mathcal{F}:\dom(D_\mathcal{F})\subseteq \ell^2 \to \h$ is given by 
$\dom(D_\mathcal{F})=\left \{ (c_n)\in \ell^2:\sum_n c_n f_n \text{ converges in } \h\right \}$  
and $D_\mathcal{F}(c_n)=\sum_n c_n f_n$, for $ (c_n)\in \dom(D_\mathcal{F})$. 
The operator {$C_\mathcal{F}$} is always closed \cite{CCLL02} and $C_\mathcal{F}=D_\mathcal{F}^*$ \cite{ABS}.
Detailed properties of the analysis and synthesis operators (alongside other operators associated to sequences) can be found in \cite{ABS}.

Let us recall some basic  
results from frame theory (that can be found e.g. in \cite{ole1n}). 
For a Bessel sequence $(f_n)$ and $(c_n)\in \ell^2$ the series $\sum_{n=1}^\infty c_n f_n$ is always convergent. 
A {\it Riesz basis} $(f_n)$ for $\h$ is a complete sequence in $\h$ satisfying 
$$
A \sum_{n} |c_n|^2 \leq \left \| \sum_{n} c_n f_n \right \|^2 \leq B\sum_{n} |c_n|^2, \qquad\forall (c_n)\in \ell^2,
$$
for some positive constants $A$ and $B$. A sequence is a Riesz basis for $\h$ if it is a frame for $\h$ that has a biorthogonal sequence. 
As done for frames and lower semi frames, we say that a sequence $(f_n)$ is a {\it weighted Riesz basis} for $\h$ if there  exists $(\omega_n)\subset (0,+\infty)$ such that $(\omega_n f_n)$ is a Riesz basis for $\h$.
Note that Problem P2 with respect to weighted Riesz bases is related to the following well-known characterization of Riesz bases. 

\begin{thm} \label{the:rieszclass} \cite[Lemma 3.6.9]{ole1n} A sequence $( f_n )$ is a Riesz basis for $\h$ if and only if it is an unconditional basis for $\h$ and it is norm-semi-normalized (i.e. $0 < \inf \| f_n \| \le \sup \| f_n \| < \infty$). In other words, a sequence $( f_n )$ is a weighted Riesz basis for $\h$ if and only if it is an unconditional basis for $\h$. 
\end{thm}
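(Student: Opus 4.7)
The first assertion is cited verbatim from Christensen's book, so my plan is to deduce the ``in other words'' statement from it. The key observation is that the property of being an unconditional basis is invariant under rescaling the basis vectors by nonzero scalars, and only the norm-semi-normalization condition can be gained or lost by such a rescaling.

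First I would handle the forward direction. Suppose $(f_n)$ is a weighted Riesz basis, so there exists $(\omega_n)\subset(0,+\infty)$ such that $(\omega_n f_n)$ is a Riesz basis. By the cited characterization, $(\omega_n f_n)$ is an unconditional basis, i.e.\ every $f\in\h$ admits a unique unconditionally convergent expansion $f=\sum_n c_n(\omega_n f_n)$. Rewriting this as $f=\sum_n (c_n \omega_n)f_n$ and noting that $\omega_n>0$ implies uniqueness of the coefficients $c_n\omega_n$ is equivalent to uniqueness of $c_n$, one sees that $(f_n)$ is itself an unconditional basis.

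For the converse direction, assume $(f_n)$ is an unconditional basis of $\h$. Since basis elements are necessarily nonzero, $\|f_n\|>0$ for every $n$, so the weights $\omega_n:=1/\|f_n\|$ are well defined and strictly positive. Then $(\omega_n f_n)$ is norm-semi-normalized (indeed $\|\omega_n f_n\|=1$ for all $n$), and it is still an unconditional basis for $\h$ by the same rescaling argument as above. The cited result now immediately gives that $(\omega_n f_n)$ is a Riesz basis for $\h$, hence $(f_n)$ is a weighted Riesz basis.

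There is no real obstacle here beyond bookkeeping: the whole argument reduces to the trivial remark that positive scalar reweighting preserves unconditional bases and can always be used to normalize the basis vectors, after which the first part of the theorem does all the work.
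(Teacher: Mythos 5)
Your argument is correct and is exactly the (implicit) derivation the paper intends: the first sentence is simply cited from Christensen, and the ``in other words'' reformulation follows from the observation that rescaling by positive weights preserves the unconditional basis property and can be used to normalize the vectors. Nothing further is needed.
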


In \cite{spexxl14,speckB17} reproducing pairs have been introduced and studied as a way to go beyond the frame reconstruction formulas.
A pair of sequences $((f_n),(g_n))$ is called a {\it reproducing pair for $\h$} if the operator $T$ defined weakly by $\<Tf,g\>=\sum_{n} \ip{f}{f_n}\ip{g_n}{g}$ belongs to $GL(\h)$. Reproducing pairs are related to the more general notion of sesquilinear forms. For study on sesquilinear forms, we refer to \cite{Kato,Corso_seq,Corso_seq2}.

\section{Weighted lower semi frames}
\label{sec3}

Since a weighted frame is, in particular, a weighted lower semi frame, we start by analyzing sequences that can be transformed into lower semi frames. 
First, observe that if a sequence is a weighted lower semi frame for $\h$, then it is complete in $\h$.
We conjecture that completeness is furthermore a sufficient condition for the lower semi frame property:

\begin{conj}
	\label{conjA}
	Let $(f_n)$ be a complete sequence in $\h$ and $A\in(0,\infty)$. There exists $(\omega_n)\subset (0,\infty)$ such that $(\omega_n f_n)$ is a lower semi frame for $\h$ with a bound $A$.
\end{conj}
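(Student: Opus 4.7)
The first step is a pair of standard reductions. If $f_n=0$ for some $n$, the weight $\omega_n$ contributes nothing and can be chosen arbitrarily, so I may assume $f_n\neq 0$ for every $n$. Replacing $\omega_n$ by $\omega_n/\|f_n\|$ then reduces to the case $\|f_n\|=1$, and the task becomes: produce $(\omega_n)\subset(0,\infty)$ with $\sum_n\omega_n^2|\<f,f_n\>|^2\ge A\|f\|^2$ for every $f\in\h$.

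The first approach I would try is a covering argument on the unit sphere $S\subset\h$. Fix a countable dense sequence $(v_k)\subset S$. By completeness, for each $k$ there is an index $n(k)\in\mn$ with $\delta_k:=|\<v_k,f_{n(k)}\>|>0$; the continuity of the functional $\<\cdot,f_{n(k)}\>$ then guarantees a neighbourhood $U_k$ of $v_k$ on which $|\<f,f_{n(k)}\>|\ge\delta_k/2$, whose radius can be taken to be $\delta_k/2$. Grouping indices via $K_n:=\{k:n(k)=n\}$ and setting $\omega_n$ so that $\omega_n^2\ge 4A/\inf_{k\in K_n}\delta_k^2$ whenever $K_n\neq\emptyset$ (with $\omega_n$ arbitrary positive otherwise), the single term at $n=n(k)$ already exceeds $A$ whenever $f\in U_k$, and so every $f\in S$ that lies in some $U_k$ is taken care of. If the $\{U_k\}$ cover $S$ and the weights are finite, the conjecture is proved.

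The hard part, and the reason I believe this yields the conjecture only under extra hypotheses, is twofold: one must ensure that the family $\{U_k\}$ actually covers $S$, and one must ensure that the infima $\inf_{k\in K_n}\delta_k$ do not vanish. Both issues stem from the same core obstruction: completeness is a purely qualitative hypothesis and gives no quantitative lower bound on $\gamma(v):=\sup_n|\<v,f_n\>|$ over $v\in S$. When $\gamma$ fails to be bounded below, a naive choice of $n(k)$ can funnel infinitely many dense points with $\delta_k\to 0$ onto the same index, forcing $\omega_n=+\infty$; simultaneously, the neighbourhoods $U_k$ may shrink too fast to cover vectors $f$ that are only well approximated by $v_k$ with $\delta_k$ tending to zero faster than $\|f-v_k\|$. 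I expect that closing the argument in full generality would require an adaptive selection procedure — for instance, first extracting a complete minimal subsequence of $(f_n)$ and calibrating the weights against the norms of its biorthogonal system, or interleaving a greedy choice of $n(k)$ with an explicit exhaustion of $S$ that tracks both the remaining uncovered directions and the accumulated weight demand at each index. The absence of a canonical way to perform this selection using only completeness is, in my view, exactly why Conjecture \ref{conjA} resists a general proof and is instead attacked in the paper under additional structural assumptions on $(f_n)$.
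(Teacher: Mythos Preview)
You are attempting to prove a statement that the paper explicitly labels a \emph{conjecture} and does \emph{not} prove in full generality; there is therefore no ``paper's own proof'' to compare against. The paper only establishes the conclusion under additional structural hypotheses on $(f_n)$ (membership in a reproducing pair, containment of a Schauder basis or frame, a closed surjective multiplier-type operator, or an analysis operator with finite-dimensional domain), and your final paragraph correctly recognises this.

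On the covering heuristic itself: of the two obstacles you isolate, the first can in fact be removed. The function $\gamma(v)=\sup_n|\<v,f_n\>|$ is lower semicontinuous (pointwise supremum of continuous functions), so if $n(k)$ is chosen with $\delta_k\ge\gamma(v_k)/2$, then for any $f\in S$ one has $\gamma>\gamma(f)/2$ on a neighbourhood of $f$; picking $v_k$ in that neighbourhood with $\|v_k-f\|<\gamma(f)/8$ gives $\delta_k>\gamma(f)/4>2\|v_k-f\|$ and hence $f\in U_k$. The genuine obstruction is your second one: nothing in the argument prevents infinitely many $v_k$ from selecting the \emph{same} index $n$ with $\delta_k\to 0$, forcing $\omega_n=+\infty$. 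The remedies you sketch (extracting a complete minimal subsequence and calibrating against its biorthogonal, or an unspecified adaptive selection) are not carried out and bring their own difficulties --- for instance, the biorthogonal of a complete minimal sequence need not be complete (cf.\ Example~\ref{exm_e1+en} and Remark~\ref{exm_e1+en_bis}), so norm control on it does not automatically deliver a lower frame bound. In short, your proposal is not a proof, as you yourself concede; it is a reasonable heuristic that halts at exactly the quantitative gap one expects for an open problem.
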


In this section we determine some classes  of sequences for which Conjecture \ref{conjA} holds.  
First of all, we prove that the corresponding problem for Bessel sequences can be easily solved and it is helpful for dealing with special cases of the above conjecture.

\begin{lemma}
	\label{lem_Bessel}
	Let $(f_n)$ be a sequence in $\h$. For any $B>0$ there exists a sequence $(\lambda_n)\subset(0,\infty)$ such that $(\lambda_n f_n)$ is a Bessel sequence with bound $B$.
\end{lemma}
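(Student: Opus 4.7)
The plan is to exploit Cauchy--Schwarz to reduce the Bessel bound estimate to a pointwise summability condition on the weights, and then pick the weights to form a convergent geometric series.

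First I would observe that for any $f\in\h$ and any $n$, Cauchy--Schwarz gives
\[
|\ip{f}{\lambda_n f_n}|^2 = \lambda_n^2\, |\ip{f}{f_n}|^2 \le \lambda_n^2\,\norm{}{f_n}^2\,\norm{}{f}^2.
\]
Summing over $n$, it therefore suffices to choose $(\lambda_n)\subset(0,\infty)$ so that
\[
\sum_{n} \lambda_n^2\,\norm{}{f_n}^2 \le B,
\]
for then $\sum_n |\ip{f}{\lambda_n f_n}|^2 \le B\norm{}{f}^2$ for every $f\in\h$, which is exactly the Bessel condition with bound $B$.

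Second, I would define the weights explicitly by cases: for each $n$ with $f_n\ne 0$ set
\[
\lambda_n := \frac{1}{\norm{}{f_n}}\sqrt{\frac{B}{2^n}},
\]
and for each $n$ with $f_n = 0$ set $\lambda_n := 1$ (any positive value works, since $\lambda_n f_n = 0$ contributes nothing to the sum). With this choice, the nonzero terms of $\sum_n \lambda_n^2\|f_n\|^2$ sum to at most $\sum_{n=1}^\infty B/2^n = B$, so the estimate in the previous step gives the desired Bessel bound.

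There is essentially no obstacle here: the argument is a standard Cauchy--Schwarz plus geometric-series trick. The only minor care needed is to handle the possibility that some $f_n$ vanish so that the formula for $\lambda_n$ does not divide by zero; this is dispatched by the case distinction above. The lemma will then be stated and proved in a few lines.
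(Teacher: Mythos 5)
Your proof is correct and follows essentially the same route as the paper: Cauchy--Schwarz reduces everything to making $\sum_n \lambda_n^2\|f_n\|^2\le B$, and the paper simply takes $\lambda_n=\tau_n\sqrt{B}\,\|f_n\|^{-1}$ for an arbitrary positive sequence with $\sum_n\tau_n^2=1$ (your geometric choice $\tau_n^2=2^{-n}$ is one instance), with the same case distinction for $f_n=0$.
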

\begin{proof}
	Let $B>0$ and let $(\tau_n)\subset(0,\infty)$ be a sequence such that $\sum_n \tau_n^2=1$.  Define  $\lambda_n=\tau_n\sqrt{B}\|f_n\|^{-1}$ if $f_n\neq0$ and  $\lambda_n =1$ (or any other positive number) if $f_n=0$. 
	Then, by the Cauchy-Schwarz inequality, we have for all $f\in \h$ that
	\[
	\sum_n |\<f,\lambda_n f_n\>|^2\leq \sum_n \tau_n^2B\|f\|^2=B\|f\|^2. \qedhere
	\]
\end{proof}

With this lemma we can then prove our first result. 

\begin{prop} 
	\label{pro_dualities}
Let  $((f_n), (g_n))$    be a reproducing pair for $\h$. 
Then there are weights $(\lambda_n)\subset (0,\infty)$ and $(\beta_n)\subset (0,\infty)$ 
such that $(\frac{1}{\lambda_n} f_n)$ and $(\frac{1}{\beta_n} g_n)$ are lower semi-frames for $\h$,  and $(\lambda_n g_n)$ and $(\beta_n f_n)$ are 
upper semi-frames for $\h$. 
In particular, Conjecture \ref{conjA} is true for $(f_n)$ and $(g_n)$. 
\end{prop}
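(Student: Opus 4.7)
The plan is to use Lemma 4.2 (which freely produces Bessel weightings) to tame the $g_n$-side, and then exploit the invertibility of the reproducing operator $T$ to extract a lower-semi-frame bound on the $f_n$-side. By the natural symmetry of the reproducing pair notion (swapping $(f_n)$ and $(g_n)$ corresponds to passing from $T$ to $T^*$, which is still in $GL(\h)$), the analogous statement for $(\beta_n)$ follows automatically.

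Concretely, I would begin as follows. By Lemma \ref{lem_Bessel} choose $(\lambda_n)\subset(0,\infty)$ so that $(\lambda_n g_n)$ is a Bessel sequence with some bound $B>0$; this is the $\lambda_n g_n$ upper-semi-frame candidate. Now I would rewrite the reproducing pair identity for any $f\in\h$ as
\[
\<Tf,h\>=\sum_n \<f,\tfrac{1}{\lambda_n}f_n\>\,\<\lambda_n g_n,h\>,\qquad h\in\h,
\]
and plug in $h=Tf$. The Cauchy–Schwarz inequality combined with the Bessel bound on $(\lambda_n g_n)$ yields
\[
\|Tf\|^2 \;\le\; \sqrt{B}\,\|Tf\|\left(\sum_n \bigl|\<f,\tfrac{1}{\lambda_n}f_n\>\bigr|^2\right)^{1/2},
\]
and after dividing by $\|Tf\|$ (the case $Tf=0$ being trivial since $T$ is injective) and using $\|f\|\le\|T^{-1}\|\,\|Tf\|$, I obtain the lower bound
\[
\|f\|^2 \;\le\; \|T^{-1}\|^{2}B\sum_n \bigl|\<f,\tfrac{1}{\lambda_n}f_n\>\bigr|^2,\qquad f\in\h.
\]
Hence $(\tfrac{1}{\lambda_n}f_n)$ is a lower semi-frame for $\h$.

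Next I would note that the pair $((g_n),(f_n))$ is again a reproducing pair, its associated operator being $T^*\in GL(\h)$ (a short computation from the defining identity shows this). Applying the argument just given with the roles of $(f_n)$ and $(g_n)$ exchanged produces a weight sequence $(\beta_n)\subset(0,\infty)$ for which $(\beta_n f_n)$ is Bessel and $(\tfrac{1}{\beta_n}g_n)$ is a lower semi-frame.

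Finally, to upgrade the two Bessel sequences $(\lambda_n g_n)$ and $(\beta_n f_n)$ to upper semi-frames I only need completeness. A lower semi-frame is automatically complete (if all inner products with the frame elements vanish then the lower bound forces $f=0$), so $(\tfrac{1}{\lambda_n}f_n)$ and $(\tfrac{1}{\beta_n}g_n)$ are complete; multiplying by nonzero scalars preserves completeness, so $(\beta_n f_n)$ and $(\lambda_n g_n)$ are complete as well, and the upper-semi-frame claims follow. The resulting lower semi-frames $(\tfrac{1}{\lambda_n}f_n)$ and $(\tfrac{1}{\beta_n}g_n)$ also verify Conjecture \ref{conjA} for $(f_n)$ and $(g_n)$. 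The only mildly delicate point in this argument is the Cauchy–Schwarz manoeuvre giving the lower semi-frame bound from an invertible $T$ together with the Bessel bound on the dual side, and that step is essentially forced by the shape of the reproducing pair identity; so I do not anticipate a serious obstacle.
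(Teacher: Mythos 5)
Your proof is correct and follows essentially the same route as the paper's: Lemma \ref{lem_Bessel} to make one side Bessel, Cauchy--Schwarz applied to the reproducing identity, and the estimate $\|f\|\le\|T^{-1}\|\,\|Tf\|$ to extract the lower bound (the paper merely packages this by first replacing $g_n$ with $T^{-1}g_n$ so as to work with a weakly dual pair, and obtains your $T^*$-symmetry step by transferring the estimate through $T$ instead). One small point worth adding: Conjecture \ref{conjA} asks for a lower semi-frame with an arbitrary \emph{prescribed} bound $A$, so you should note that the Bessel bound in Lemma \ref{lem_Bessel} may be taken as $B=(A\|T^{-1}\|^{2})^{-1}$, which turns your constant $\|T^{-1}\|^{2}B$ into exactly $A^{-1}$.
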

\begin{proof}
	Let $T\in GL(\h)$ be the operator such that $\<Tf,g\>=\sum_n \<f,f_n\>\<g_n,g\>$ for all $f,g\in \h$. Then
	\begin{equation}\label{reprpair2}
	\< f,g\>=\sum_n \<f,f_n\>\< h_n,g\>, \qquad \forall f,g\in \h,
	\end{equation}
	where $(h_n)=(T^{-1} g_n)$.
	
	Let $A>0$. By Lemma \ref{lem_Bessel}, there exists a sequence $(\lambda_n)\subset(0,+\infty)$ such that $(\lambda_n h_n)$ is Bessel with bound $A^{-1}$. 
	Since
	$$
		\< f,g\>=\sum_n \<f,\lambda_n^{-1} f_n\>\< \lambda_n h_n,g\>, \qquad \forall f,g\in \h,
		$$
using the Cauchy-Schwarz inequality, similar to \cite[Lemma 6.3.2]{ole1n} we get that for every $f\in\h$, 
		$$\|f\|^4\leq (\sum_{n} |\<f,\lambda_n^{-1} f_{n}\>|^2)(\sum_{n} |\<f,\lambda_n h_n\>|^2)\leq A^{-1}\|f\|^2\sum_{n} |\<f,\lambda_n^{-1} f_{n}\>|^2. $$ 	
	Thus, $(\lambda_n^{-1} f_{n})$ is a lower semi frame for $\h$ with bound $A$. Since 
	$(\lambda_n h_n)$ is a Bessel sequence, clearly $(\lambda_n g_n)$ is also Bessel. 
	
	Further, by Lemma \ref{lem_Bessel}, take a sequence $(\beta_n)\subset(0,+\infty)$ such that $(\beta_n f_n)$ is Bessel with bound $B=A^{-1}\|T^{-1}\|^{-2}$.
	Similar to the above paragraph, using the Cauchy-Schwarz inequality, we get that the sequence 
	$(\beta_n^{-1} h_{n})$ is a lower semi frame for $\h$ with bound $B^{-1}$. 
	 Then $(\beta_n^{-1} g_{n})$ is a lower semi frame for $\h$ with bound $B^{-1}\|T^{-1}\|^{-2}=A$. 
	
	Since $(\lambda_n^{-1} f_n)$ and $(\beta_n^{-1} g_n)$ are complete in $\h$, clearly $(\lambda_n g_n)$ and $(\beta_n f_n)$ are also complete in $\h$. 
\end{proof}

As particular case, Proposition \ref{pro_dualities} 
applies to sequences $(f_n)$ and $(g_n)$ which are {\it weakly dual} to each other in the sense that
	\begin{equation}
		\label{weaklydual}
	\< f,g\>=\sum_n \<f,f_n\>\<g_n,g\>, \qquad \forall f,g\in \h,
	\end{equation}
	or when the operator  $M$ given by 	
	\begin{equation} \label{multiplier1} M f=\sum_n \<f,g_n\> f_n 	\end{equation}
	is well defined on $\h$ and belongs to $GL(\h)$. Note that an operator $M$ determined by \eqref{multiplier1} is 
	a multiplier with a constant symbol (see Section \ref{sec_multipliers} for the definition of multipliers); see \cite{SB2012, SB2013,SB2020} 
	for results related to the question when a multiplier is in $GL(\h)$. 
For studies of representations of unbounded operators like  \eqref{multiplier1} we refer to \cite{BC}.

In particular, by the proofs of Proposition  \ref{pro_dualities} and	Lemma \ref{lem_Bessel}, the following can be stated for the important case of weakly dual sequences:
		
\begin{cor} \label{cor:reweightdual2} 
Let $((f_n), (g_n))$ be a weakly dual pair for $\h$, i.e., 
\eqref{weaklydual} holds. 
Then there are weights $(\lambda_n)\subset (0,\infty)$ and $(\beta_n)\subset (0,\infty)$ such that $(\frac{1}{\lambda_n} f_n, \lambda_n g_n)$ and $(\beta_n  f_n, \frac{1}{\beta_n} g_n)$ are weakly dual pairs for $\h$, 
 $(\frac{1}{\lambda_n} f_n)$ and $(\frac{1}{\beta_n} g_n)$ are lower semi-frames for $\h$,  and $(\lambda_n g_n)$ and $(\beta_n f_n)$ are 
 upper semi-frames for $\h$. 
 The weights can be chosen e.g. as  $\lambda_n=\tau_n\|g_n\|^{-1}$ if $g_n\neq 0$,  $\lambda_n=1$ if $g_n=0$, $\beta_n=\tau_n\|f_n\|^{-1}$ if $f_n\neq 0$, and $\beta_n=1$ if $f_n=0$, where $(\tau_n)\subset(0,\infty)$ is a sequence such that $\sum_n \tau_n^2=1$.
\end{cor}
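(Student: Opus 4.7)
The plan is to simply specialize the argument of Proposition \ref{pro_dualities} to the case where the reproducing pair operator is the identity, and then to check the extra ``weakly dual'' conclusion that is not explicit in Proposition~\ref{pro_dualities}. Since $((f_n),(g_n))$ is weakly dual, the operator $T$ appearing in the proof of Proposition~\ref{pro_dualities} is $T=I$, hence $h_n=T^{-1}g_n=g_n$. Therefore the weights $(\lambda_n)$ produced by Lemma~\ref{lem_Bessel} to turn $(h_n)$ into a Bessel sequence are precisely weights that make $(\lambda_n g_n)$ Bessel, and the explicit construction in Lemma~\ref{lem_Bessel} (taking Bessel bound $B=1$) gives exactly $\lambda_n=\tau_n\|g_n\|^{-1}$ when $g_n\ne 0$ and $\lambda_n=1$ otherwise. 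The analogous choice for $(\beta_n)$ makes $(\beta_n f_n)$ Bessel.

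Next I would transport the weak duality through the rescaling. This is just the identity
\[
\<f,g\>=\sum_n\<f,f_n\>\<g_n,g\>=\sum_n\<f,\lambda_n^{-1}f_n\>\<\lambda_n g_n,g\>,\qquad\forall f,g\in\h,
\]
valid since $\lambda_n>0$; so $(\lambda_n^{-1}f_n,\lambda_n g_n)$ is a weakly dual pair, and symmetrically for the $\beta_n$-rescaling. The lower semi-frame bounds for $(\lambda_n^{-1}f_n)$ and $(\beta_n^{-1}g_n)$ then drop out of the Cauchy--Schwarz trick from the proof of Proposition~\ref{pro_dualities}: starting from the reconstruction formula with the rescaled sequences, one estimates $\|f\|^4$ by the product of two $\ell^2$-sums, one of which is controlled by the Bessel bound, leaving the other bounded below.

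For the upper semi-frame conclusions, it suffices (given the Bessel property already established) to verify completeness. From $\<f,g\>=\sum_n\<f,f_n\>\<g_n,g\>$, if $\<g_n,g\>=0$ for every $n$ then $\<f,g\>=0$ for every $f$, hence $g=0$; thus $(g_n)$ and therefore $(\lambda_n g_n)$ is complete in $\h$. The same argument with the roles of $f_n,g_n$ swapped shows completeness of $(\beta_n f_n)$. So $(\lambda_n g_n)$ and $(\beta_n f_n)$ are Bessel and complete, i.e.\ upper semi-frames.

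I do not foresee a serious obstacle: everything reduces to bookkeeping around Proposition~\ref{pro_dualities} and Lemma~\ref{lem_Bessel} in the trivial case $T=I$. The only mildly delicate point is ensuring that the explicit weights displayed in the statement agree with those produced by Lemma~\ref{lem_Bessel}; this just amounts to choosing the Bessel bound $B=1$ in that lemma, and to remembering that the definitions $\lambda_n=1$, $\beta_n=1$ when $g_n=0$ or $f_n=0$ are harmless because such terms contribute $0$ to every series involved.
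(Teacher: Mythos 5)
Your proposal is correct and follows essentially the same route as the paper, which presents this corollary as a direct specialization of the proofs of Proposition~\ref{pro_dualities} and Lemma~\ref{lem_Bessel} to the case $T=I$ (so $h_n=g_n$), with the explicit weights coming from taking $B=1$ in Lemma~\ref{lem_Bessel}. Your added verifications --- that the weak duality is preserved under the reciprocal rescaling and that completeness of $(g_n)$ and $(f_n)$ follows directly from \eqref{weaklydual} --- are exactly the bookkeeping needed.
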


As another direct application of Proposition \ref{pro_dualities} we have the following statement for Schauder bases\footnote{Recall that a sequence $(f_n)$ in $\h$ is a Schauder basis for $\h$ if there exists a unique sequence $(g_n)$ in $\h$ such that $f=\sum_n \ip{f}{g_n} f_n$ for every $f\in \h$.}.

\begin{cor}
	Let $(f_n)$ be a sequence in $\h$ containing a Schauder basis for $\h$ or a frame for $\h$. Then Conjecture \ref{conjA} is true for $(f_n)$. 
\end{cor}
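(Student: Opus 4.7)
The plan is to reduce both cases to Proposition \ref{pro_dualities} by exhibiting an explicit reproducing pair $((f_n),(g_n))$ for $\h$ built from the Schauder basis (or frame) sub-sequence of $(f_n)$. Once such a pair is constructed, the last sentence of Proposition \ref{pro_dualities} gives Conjecture \ref{conjA} for $(f_n)$ immediately.

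For the construction, let $(f_{n_k})_k$ be the Schauder basis (resp.\ frame) contained in $(f_n)$, and let $(g_{n_k})_k\subset\h$ be its (unique) biorthogonal sequence (resp.\ any dual frame). In either case every $g\in\h$ admits the reconstruction expansion $g=\sum_k \<g,g_{n_k}\>f_{n_k}$ in $\h$, so pairing with $f\in\h$ and using continuity of the inner product yields
\[
\<f,g\>=\sum_k \<f,f_{n_k}\>\<g_{n_k},g\>, \qquad \forall f,g\in\h.
\]
Define the mate sequence by $g_n:=g_{n_k}$ if $n=n_k$ for some $k$ and $g_n:=0$ otherwise; then the right-hand side above is exactly $\sum_n \<f,f_n\>\<g_n,g\>$, so the operator $T$ appearing in the reproducing-pair definition is the identity $I\in GL(\h)$ and $((f_n),(g_n))$ is a reproducing pair for $\h$.

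With this pair in hand, Proposition \ref{pro_dualities} supplies, for any prescribed $A>0$, weights $(\lambda_n)\subset(0,\infty)$ such that $(\lambda_n^{-1}f_n)$ is a lower semi frame for $\h$ with bound $A$, which is exactly the content of Conjecture \ref{conjA} for $(f_n)$. I expect the only point meriting explicit verification to be that the argument in the proof of Proposition \ref{pro_dualities} is not disturbed by the padded zero entries of $(g_n)$; this is already handled by the convention $\lambda_n=1$ at vanishing indices built into Lemma \ref{lem_Bessel}, so I do not foresee a genuine obstacle here.
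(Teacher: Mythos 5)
Your proposal is correct and matches the paper's intent: the corollary is stated there as a direct application of Proposition \ref{pro_dualities}, and the implicit argument is exactly your zero-padding of the biorthogonal (resp.\ dual) sequence so that \eqref{weaklydual} holds with $T=I$, making $((f_n),(g_n))$ a (weakly dual, hence reproducing) pair. Your closing observation is also right: the zero entries cause no trouble, since Lemma \ref{lem_Bessel} explicitly accommodates vanishing elements and the Cauchy--Schwarz step in the proof of Proposition \ref{pro_dualities} is unaffected by them.
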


Recall that when $(f_n)$ is a sequence in $\h$ whose synthesis operator is closed and surjective, then there is a Bessel sequence $(g_n)$ such that $f=\sum\<f, g_n\>f_n$ for every $f\in\h$ and $(f_n)$ satisfies the lower frame condition for $\h$, see \cite[Theorem 8.4.1]{ole1n}. Let us now consider the following (more general) case 
 using closed and surjective operator of the form \eqref{multiplier1}:

\begin{prop}\label{propclosedsurj}	Let $(f_n)$ and $(g_n)$ be sequences in $\h$. Define the operator $M$ by (\ref{multiplier1}) 
	with domain determined by those $f$ for which the series in \eqref{multiplier1} converges.
	If $M$ is closed and $R(M)=\h$, then Conjecture \ref{conjA} is true for $(f_n)$. 
\end{prop}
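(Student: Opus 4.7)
The plan is to construct a bounded right inverse $S \in \mathcal{B}(\h)$ of $M$ (with $R(S) \subseteq \dom(M)$), push the surjectivity through it to obtain a reconstruction
\[
f \;=\; MSf \;=\; \sum_n \langle f, h_n\rangle\,(\lambda_n^{-1} f_n), \qquad f \in \h,
\]
with $(h_n)$ a Bessel sequence, and then extract the lower semi-frame bound for $(\lambda_n^{-1} f_n)$ from the same Cauchy-Schwarz argument already employed in the proof of Proposition \ref{pro_dualities}.

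For the first step I would exploit the graph Hilbert space structure of $\dom(M)$: since $M$ is closed, $\dom(M)$ equipped with the graph inner product $\langle u,v\rangle_M := \langle u,v\rangle + \langle Mu,Mv\rangle$ is a Hilbert space, and $M$ is bounded from this Hilbert space onto $\h$. The kernel $N(M)$ is closed in $(\dom(M),\|\cdot\|_M)$, so $M$ restricts to a bounded linear bijection from its graph-orthogonal complement $N(M)^{\perp_M}$ onto $\h$. Banach's isomorphism theorem then supplies a bounded inverse which, composed with the inclusion into $\h$, produces the desired $S \in \mathcal{B}(\h)$ with $MSf = f$ and $\|Sf\| \leq \|S\|\,\|f\|$ for every $f \in \h$.

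Next I would invoke Lemma \ref{lem_Bessel} to pick $(\lambda_n) \subset (0,\infty)$ so that $(\lambda_n g_n)$ is Bessel, and set $h_n := \lambda_n\, S^* g_n = S^*(\lambda_n g_n)$. As the image of a Bessel sequence under the bounded operator $S^*$, the sequence $(h_n)$ is Bessel with some bound $B_h$. A direct rewriting of $MSf$ yields the displayed reconstruction, and the Cauchy-Schwarz inequality, used exactly as in \cite[Lemma 6.3.2]{ole1n} and in the proof of Proposition \ref{pro_dualities}, forces
\[
\|f\|^4 \;\leq\; \Big(\sum_n |\langle f, h_n\rangle|^2\Big) \Big(\sum_n |\langle f, \lambda_n^{-1} f_n\rangle|^2\Big) \;\leq\; B_h\,\|f\|^2 \sum_n |\langle f, \lambda_n^{-1} f_n\rangle|^2,
\]
so $(\lambda_n^{-1} f_n)$ is a lower semi-frame for $\h$. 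A final rescaling of the weights by $\sqrt{A B_h}$ adjusts the bound to any prescribed $A \in (0,\infty)$.

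The principal difficulty is Step 1: constructing a bounded right inverse without assuming that $\dom(M)$ is dense in $\h$ (this is not stated in the hypothesis, so one cannot immediately invoke adjoint-based inequalities of the form $\|M^* g\| \geq A\|g\|$). The graph-Hilbert-space route circumvents the density issue, and once $S$ is available the remainder is a reweighting combined with the same Cauchy-Schwarz computation that underpinned Proposition \ref{pro_dualities}.
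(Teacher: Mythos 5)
Your proof is correct and follows essentially the same route as the paper: the paper obtains the bounded right inverse by citing the Beutler--Root pseudo-inverse $M^{+}\in\mathcal{B}(\h)$ of the closed surjective operator $M$, writes $h=\sum_n\ip{h}{M^{+*}g_n}f_n$, and then invokes Corollary \ref{cor:reweightdual2} for the reweighting-plus-Cauchy--Schwarz step that you carry out explicitly. Your hand-made construction of $S$ via the graph norm and the open mapping theorem is a valid, self-contained substitute for the cited pseudo-inverse result, and the rest matches the paper's argument.
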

\begin{proof}
	By \cite[Lemma 1.1, Corollary 1.2]{BR}, the operator $M$  admits a pseudo-inverse $M^{+}$ which belongs to $\mathcal{B}(\h)$ since $R(M)=\h$. 
	This means, in particular, that 
	$M M^{+}h=h$ for every $h\in \h$. 
	 Then 
	$$
	h=M M^{+}h=\sum_n \ip{M^+h}{g_n}f_n =\sum_n \ip{h}{M^{+*}g_n}f_n, \qquad \forall h\in \h.
	$$
	By Corollary \ref{cor:reweightdual2}, Conjecture \ref{conjA} is true for $(f_n)$.
\end{proof}

We proceed with one more class of sequences for which Conjecture \ref{conjA} holds.

\begin{prop}
	\label{D(C)_finite}
	Any complete sequence in $\h$ whose analysis operator has finite dimensional domain is a lower semi frame for $\h$ (thus, Conjecture \ref{conjA} trivially holds for it). 
\end{prop}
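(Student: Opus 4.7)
The plan is to exploit the fact that on a finite-dimensional subspace every linear operator is automatically bounded, and an injective linear operator is automatically bounded below.

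Let $\mathcal{F} = (f_n)$ be complete in $\h$ with $V := \dom(C_\mathcal{F})$ of finite dimension, and consider the restriction $C_\mathcal{F}|_V : V \to \ell^2$. First I would observe that since $V$ is finite-dimensional (hence a closed subspace of $\h$), the restriction $C_\mathcal{F}|_V$ is a linear operator defined on all of this finite-dimensional space, so it is automatically bounded. Next, I would check injectivity: if $C_\mathcal{F} f = 0$ for some $f \in V$, then $\ip{f}{f_n} = 0$ for every $n$, and completeness of $(f_n)$ forces $f = 0$.

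With $C_\mathcal{F}|_V$ injective on the finite-dimensional space $V$, standard linear algebra (equivalence of norms on finite-dimensional spaces, or compactness of the unit sphere) yields a constant $A > 0$ such that
\[
\sum_{n} |\ip{f}{f_n}|^2 = \|C_\mathcal{F} f\|_{\ell^2}^2 \geq A \|f\|^2, \qquad \forall f \in V.
\]
For any $f \in \h \setminus V$, by definition of $V$ we have $\sum_n |\ip{f}{f_n}|^2 = +\infty$, so the inequality $\sum_n |\ip{f}{f_n}|^2 \geq A\|f\|^2$ holds trivially. Combining the two cases, $(f_n)$ satisfies the lower frame inequality with bound $A$, i.e., it is a lower semi frame for $\h$, and in particular the trivial weights $\omega_n = 1$ verify Conjecture \ref{conjA} in this situation.

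There is no real obstacle here; the only point that needs a moment of care is the transition from "$C_\mathcal{F}$ is closed" to "$C_\mathcal{F}|_V$ is bounded," which is resolved immediately by the finite-dimensionality of $V$ (so one does not actually need to invoke closedness at all).
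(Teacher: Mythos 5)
Your proof is correct. It differs from the paper's in the technical vehicle, though both arguments ultimately rest on finite-dimensional linear algebra. The paper introduces the orthogonal projection $P$ onto $\dom(C_\mathcal{F})$, uses the identity $\ip{f}{f_n}=\ip{f}{Pf_n}$ for $f\in\dom(C_\mathcal{F})$, extracts from the (complete) projected sequence $(Pf_n)$ a finite linearly independent subfamily that forms a Riesz basis of the $d$-dimensional domain, and concludes that $(Pf_n)$ is a lower semi frame for $\dom(C_\mathcal{F})$. You bypass the projection entirely: you work with the restriction $C_\mathcal{F}|_V$ itself, observe that completeness of $(f_n)$ makes it injective, and invoke the fact that an injective linear map on a finite-dimensional normed space is bounded below (equivalently, that $f\mapsto\|C_\mathcal{F}f\|_{\ell^2}$ is a norm on $V$ equivalent to $\|\cdot\|$). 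Both routes handle $f\notin\dom(C_\mathcal{F})$ identically, via divergence of the series. Your version is slightly more economical in that it avoids the projection and the extraction of a finite basis, and your closing remark is also apt: closedness of $C_\mathcal{F}$ plays no role once the domain is known to be finite dimensional.
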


\begin{proof} 
	Let $(f_n)$ be a complete sequence in $\h$ such that its analysis operator $C$ has finite dimensional (closed) domain $\dom(C)$. Then for $f\in \dom(C)$ we have
	\begin{equation}
		\label{eq1}
		\sum_{n=1}^\infty |\<f,f_n\>|^2=\sum_{n=1}^\infty |\<f,Pf_n\>|^2,
	\end{equation} where $P$ is the orthogonal projection onto $\dom(C)$. The sequence $(Pf_n)$ is complete in $\dom(C)$ and since $\dim \dom(C)=d<\infty$, there exist 
	$f_{n_1},f_{n_2}, \dots, f_{n_d}$ such that $Pf_{n_1},Pf_{n_2}, \dots, Pf_{n_d}$ are linearly independent and thus a Riesz basis for $\dom(C)$. This implies that $(Pf_n)$ is a lower semi frame for $\dom(C)$. 
	By \eqref{eq1}, $(f_n)$ satisfies the lower frame condition for $\dom(C)$ and hence $(f_n)$ is a lower semi frame for $\h$. 
\end{proof}

Note that if the analysis operator $C$ of $(f_n)$ has finite dimensional domain then also the orthogonal complement 
of $(f_n)$ is finite dimensional. This follows by the fact that if $h\perp f_n $ for every $n$, then $h\in \dom(C)$.

\begin{rem} 
As it can be expected, the above propositions do not cover all the possible classes of sequences for which Conjecture \ref{conjA} holds.  
	As an illustration, consider for example the sequence $(f_n)_{n=2}^\infty=(n(e_1+e_n))_{n=2}^\infty$ where $(e_n)_{n=1}^\infty$ denotes an orthonormal basis for $\h$. It is shown in \cite{S05} that $(f_n)_{n=2}^\infty$ satisfies the lower frame condition for $\h$,  
		but not all the elements of $\h$ can be represented in the form $\sum_n c_n f_n$ (hence,  Proposition \ref{propclosedsurj} does not apply) and the representation 
\eqref{reprlf} can not be extended to the entire space $\h$. In a similar way as in \cite{S05} it can be shown that $(f_n)$ 
	 does not generate a reproducing pair $(f_n, g_n)$  for $\h$ and thus Proposition \ref{pro_dualities} does not apply. Clearly, the domain of the analysis operator of $(f_n)$ is not finite dimensional, so Proposition \ref{D(C)_finite} does not apply either. 
\end{rem}

The above remark motivates interest to further consideration of Conjecture \ref{conjA}. 
Let us end this section showing that we can reduce the conjecture studying only sequences with densely defined analysis operators.

\begin{thm}
	The following statements are equivalent. 
	\begin{enumerate}
		\item Conjecture \ref{conjA} is true. 
		\item 
		 Conjecture \ref{conjA} is true for every complete sequence in $\h$ whose analysis operator is densely defined\,\footnote{i.e., whose synthesis operator is closable \cite{ABS}; in such case one also has $D \subseteq C^*$ \cite{Ole1995}.}. 
\end{enumerate}
\end{thm}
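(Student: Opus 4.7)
The direction (1)$\Rightarrow$(2) is immediate, since the class in (2) is a subclass of the class of all complete sequences. The substantive direction is (2)$\Rightarrow$(1), and I expect this to be a short reduction that composes two weight sequences.

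My plan is the following. Assume (2), and let $(f_n)$ be an arbitrary complete sequence in $\h$ and fix $A\in(0,\infty)$. First, apply Lemma \ref{lem_Bessel} to obtain a weight sequence $(\lambda_n)\subset(0,\infty)$ such that $(\lambda_n f_n)$ is a Bessel sequence in $\h$. Since $\lambda_n>0$ for every $n$, multiplying term by term by positive scalars does not destroy completeness, so $(\lambda_n f_n)$ remains complete in $\h$. Moreover, as a Bessel sequence, its analysis operator is bounded and defined on all of $\h$, and in particular densely defined. Thus $(\lambda_n f_n)$ falls in the class covered by hypothesis (2).

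Applying (2) to $(\lambda_n f_n)$ with the same $A$, we obtain weights $(\mu_n)\subset(0,\infty)$ such that $(\mu_n \lambda_n f_n)$ is a lower semi frame for $\h$ with bound $A$. Setting $\omega_n:=\mu_n\lambda_n\in(0,\infty)$, the sequence $(\omega_n f_n)$ witnesses Conjecture \ref{conjA} for the original $(f_n)$ with bound $A$. Since $(f_n)$ and $A$ were arbitrary, this proves (1).

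I do not anticipate a real obstacle here: the entire argument is the composition of Lemma \ref{lem_Bessel} (which allows any sequence to be made Bessel, hence in particular its analysis operator to be defined everywhere) with the hypothesis (2). The only small points to make explicit are that positive rescaling preserves completeness and that ``defined everywhere'' implies ``densely defined.'' No subtlety about closability of the synthesis operator is needed, because the intermediate sequence is actually Bessel, which is much stronger than having a densely defined analysis operator.
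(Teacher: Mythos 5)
Your proof is correct, and it takes a genuinely different (and shorter) route than the paper's. You reduce to the densely defined case by first invoking Lemma \ref{lem_Bessel} to rescale $(f_n)$ into a complete Bessel sequence $(\lambda_n f_n)$, whose analysis operator is defined on all of $\h$ and hence densely defined; applying statement 2 to this sequence and composing the two positive weight sequences then yields the conjecture for $(f_n)$ with the prescribed bound $A$. The small points you flag do hold: positive rescaling preserves the linear span and hence completeness, and ``everywhere defined'' implies ``densely defined.'' In fact your argument proves the formally stronger assertion that Conjecture \ref{conjA} is equivalent to its restriction to complete Bessel sequences. The paper instead keeps the original vectors and works with $W=\overline{\dom(C_{\mathcal F})}$: it disposes of the case $\dim W<\infty$ via Proposition \ref{D(C)_finite}, transports $(P_W f_n)$ to $\h$ by a unitary $J:W\to\h$ to obtain a complete sequence with densely defined analysis operator, applies statement 2 there, and transfers the weights back, normalizing so that $\omega_n>1$ in order to force any $f$ with $\sum_n|\<f,\omega_n f_n\>|^2<\infty$ into $\dom(C_{\mathcal F})\subseteq W$. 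That route preserves the geometry of the original sequence (only a projection and a unitary intervene, no rescaling) and matches the ``closure of the domain'' philosophy the authors cite from \cite{bs}, but for the stated equivalence your Bessel-rescaling shortcut is perfectly adequate.
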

\begin{proof}
	$1.\implies 2.$ Clear.\\
	$2.\implies 1.$ Let $\mathcal{F}:=(f_n)$ be a complete sequence in $\h$ with analysis operator $C_\mathcal{F}$. 
We will prove that Conjecture \ref{conjA} holds for $F$  based on the assumption for validity of $2.$ 
	Let $W=\overline{\dom(C_\mathcal{F})}$ be the closure of $\dom(C_\mathcal{F})$ in $\h$ and let $P_W$ be the orthogonal projection onto $W$.  If $\dim W<\infty$, then the conjecture is true for $\mathcal{F}$ by Proposition \ref{D(C)_finite}. Assume that $W$ has infinite dimension. Recall that $\h$ is separable, therefore $W$ is separable and there exists a unitary operator $J:W\to \h$ (i.e., $J^*=J^{-1}$).  	Define $(h_n):=(P_W f_n)$ and $\mathcal{G}:=(g_n):=(Jh_n)$. The sequences $(h_n)$ and $(g_n)$ are complete in $W$ and $\h$, respectively. 
Let $C_{\mathcal{G}}:\dom(C_{\mathcal{G}})\subset \h\to \ell^2$ be the analysis operator of $\mathcal{G}$. Observe that for $f\in W$ the following relations hold
	$$
	\sum_n |\<Jf,g_n \>|^2=\sum_n |\<f,h_n \>|^2=\sum_n |\<P_W f,f_n \>|^2=\sum_n |\<f,f_n \>|^2,
	$$
	which implies that  $Jf\in \dom(C_{\mathcal{G}})$ if and only if $f\in \dom(C_{\mathcal{F}})$.	
	Therefore, $ J\dom(C_\mathcal{F})=\dom(C_\mathcal{G})$, leading to the conclusion that $\dom(C_{\mathcal{G}})$ is dense in $\h$.

	Fix an arbitrary $A>0$. Since $(g_n)$ is complete in $\h$ with $\dom(C_{\mathcal{G}})$ being dense in $\h$, by assumption there is a sequence $(\omega_n)\subset (0,\infty)$ 
	such that $(\omega_n g_n)$ is a lower semi frame for $\h$ with bound $A$. 
	By making the substitution $\omega_n \to \omega_n +1$, we can assume 
	 that $\omega_n>1$ for every $n$. 
	 It remains to prove that $(\omega_n f_n)$ is a lower semi frame for $\h$ with bound $A$. 

	Let $f\in \h$. If $ \sum_{n} |\<f,\omega_n f_n\>|^2=\infty$, then the lower frame inequality trivially holds. 
	Now consider the remaining case. In this case we have   
	$$
	\sum_{n} |\<f,f_n\>|^2\leq \sum_{n} |\<f,\omega_nf_n\>|^2<\infty,
	$$
	so $f$ is necessarily in $\dom(C_\mathcal{F})\subset W$. 
		Therefore,  
$$
	\sum_{n} |\<f,\omega_n f_n\>|^2=\sum_{n} |\<P_Wf,\omega_n f_n\>|^2
	=\sum_{n} |\<f,\omega_nh_n\>|^2
	=  \sum_{n} |\<Jf,\omega_n g_n\>|^2 \geq A \|Jf\|^2=A\|f\|^2. \qedhere
$$
\end{proof}

In \cite{bs} a similar approach looking at the closure of the domain of the analysis operator is used for results of lower frames sequences and Riesz-Fisher sequences.

\section{Weighted frames}
\label{sec4}

Note that in the literature \cite{Jacques,BVAJM,xxljpa1} Definition \ref{def_wf} was used to define the concept of $w$-frame while weighted frames were used either with the meaning of a $w$-frame or with the meaning of a sequence which can be transformed to a frame via a weight $ (w_n)\subset \mathbb{C}$. 
Trivially, Definition \ref{def_wf} is equivalent to a definition which requires $(\omega_n)\subset \mathbb{C}$ and $\omega_n\neq 0$ for every $n$ (taking $(|\omega_n|)$ in consideration). 
Notice that Lemma \ref{lem_Bessel} can bring conclusion for equivalence of Definition \ref{def_wf} to a definition that furthermore allows zero elements in $(w_n)$ (so, one can also use $[0,+\infty]$ or the entire set $\mathbb{C}$ for an equivalent definition).    
Indeed, assume that $(f_n)$ is a sequence such that $(\omega_n' f_n)$ is a frame for $\h$ for some $(\omega_n')\subset [0,\infty)$. Denote $I=\{n\in \mathbb{N}: \omega_n'\neq 0\}$ and split $(f_n)=(f_{m})_{m\in I}\cup (f_{k})_{k\notin I}$. By Lemma \ref{lem_Bessel}, there exists a sequence $(\tau_{k})_{k\notin I}\subset (0,\infty)$ such that $(\tau_{k} f_{k})_{k\notin I}$ is a Bessel sequence. Now, define $\omega_n=\omega_n'$ if $n\in I$ and $\omega_n=\tau_n$ if $n\notin I$. By construction, $(\omega_n)\subset (0,+\infty)$. Since $(\omega_m'f_m)_{m\in I}$ is a frame for  $\h$ and $(\tau_{k} f_{k})_{k\notin I}$ is a Bessel sequence, the sequence $(\omega_n f_n)$ is a frame for $\h$.

\subsection{Necessary conditions}

We begin with some necessary conditions that are easy consequences of known results about frames:

\begin{cor}
	\label{duality_weighted} 
	If $(f_n)$ is a weighted frame for $\h$, then
 $(f_n)$ is complete in $\h$ and there exists a sequence $(g_n)$ in $\h$ such that 
		$$
		f=\sum_n \<f,f_n\> g_n =\sum_n \<f,g_n\> f_n , \qquad \forall f\in \h,
		$$
		with unconditional convergence. 
\end{cor}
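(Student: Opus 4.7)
The plan is to reduce the statement directly to the classical frame reconstruction theorem by moving the weights $(\omega_n)$ across the inner products. Since $(f_n)$ is a weighted frame, pick $(\omega_n)\subset(0,\infty)$ such that $(\omega_n f_n)$ is a frame for $\h$. Because every frame is complete, and scaling individual vectors by nonzero weights preserves the linear span, completeness of $(f_n)$ is immediate.

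For the reconstruction formulas, I would invoke the standard fact (see e.g.\ \cite{ole1n}) that the frame $(\omega_n f_n)$ admits a dual frame $(h_n)$ in $\h$ such that, for every $f\in\h$,
$$
f=\sum_n \ip{f}{\omega_n f_n}\, h_n = \sum_n \ip{f}{h_n}\, \omega_n f_n,
$$
with unconditional convergence of both series. The key observation is that the weights $\omega_n$ are positive real, so $\ip{f}{\omega_n f_n}=\omega_n\ip{f}{f_n}$ and $\omega_n\ip{f}{h_n}=\ip{f}{\omega_n h_n}$. Setting $g_n:=\omega_n h_n$ then gives
$$
f=\sum_n \ip{f}{f_n}\, g_n = \sum_n \ip{f}{g_n}\, f_n, \qquad \forall f\in\h.
$$

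The unconditional convergence of the new series is inherited from the frame expansions: multiplication of the terms of an unconditionally convergent series by a fixed nonzero scalar in each summand does not affect unconditional convergence, since the partial sums over any permutation of $\mathbb{N}$ coincide with those of the original expansion. There is no real obstacle here; the whole argument is essentially a bookkeeping exercise, and the only thing to watch is that the same sequence $(g_n)=(\omega_n h_n)$ works for both representations, which follows from the reality (and in fact positivity) of the weights.
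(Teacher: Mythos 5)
Your proof is correct and is exactly the argument the paper has in mind: the corollary is stated there as an ``easy consequence of known results about frames,'' namely passing to a dual frame $(h_n)$ of $(\omega_n f_n)$ and absorbing the positive weights into $g_n=\omega_n h_n$, with unconditional convergence preserved because the terms of the series are literally unchanged. Nothing is missing.
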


As in the case with frames, where completeness is necessary and not sufficient condition when the space is infinite dimensional, here completeness is also necessary and not sufficient for the weighted frame property. An illustration is given in the following example.

\begin{ex}
	\label{exm_e1+en}
	Let $(f_n)_{n=2}^\infty=(e_1+e_n)_{n=2}^\infty$, where $(e_n)_{n=1}^\infty$ is an orthonormal basis for $\h$. It is easy to see that  $(f_n)_{n=2}^\infty$ is complete in $\h$. However, $(f_n)_{n=2}^\infty$ is not a weighted frame for $\h$. Indeed, with an argument similar to the one used in \cite{S05}, there does not exist a sequence $(g_n)_{n=2}^\infty$ such that 
	$$
	f=\sum_n \<f,f_n\> g_n, \qquad \forall f\in \h.
	$$	
Then, by Proposition \ref{duality_weighted}, $(f_n)_{n=2}^\infty$ is not a weighted frame for $\h$.
\end{ex}

The next statement gives a class of complete sequences that are not weighted frames. For recent results on completeness in relation to the Bessel and Riesz basis properties we reefer to \cite{S2020}. 

\begin{prop}
	\label{weighted_biortho}
	Let $(f_n)$ be a complete minimal sequence in $\h$ whose biorthogonal sequence is not complete in $\h$. Then $(f_n)$ is not a weighted frame for $\h$. 
\end{prop}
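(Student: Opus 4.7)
The plan is to argue by contradiction, exploiting the characterization of a Riesz basis as a frame with a biorthogonal sequence that the authors already recorded just before Theorem \ref{the:rieszclass}.

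Suppose, toward a contradiction, that there exists $(\omega_n)\subset(0,\infty)$ such that $(\omega_n f_n)$ is a frame for $\h$. First, I would observe that minimality is preserved under nonzero rescaling: since $(f_n)$ is complete and minimal with unique biorthogonal $(g_n)\subset\h$, the sequence $(\omega_n f_n)$ is also complete and minimal, with biorthogonal $(\omega_n^{-1}g_n)$. Thus $(\omega_n f_n)$ is a frame that possesses a biorthogonal sequence.

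The second step uses the characterization cited in the paper just before Theorem \ref{the:rieszclass}: a frame that admits a biorthogonal sequence is a Riesz basis. Therefore $(\omega_n f_n)$ is a Riesz basis for $\h$. It is standard (and follows directly from the Riesz basis definition via the bounded invertible operator from an orthonormal basis) that the biorthogonal sequence of a Riesz basis is itself a Riesz basis, in particular complete in $\h$. Consequently $(\omega_n^{-1} g_n)$ is complete in $\h$. Since $\omega_n^{-1}\neq 0$ for every $n$, the linear spans of $(g_n)$ and $(\omega_n^{-1}g_n)$ coincide, so $(g_n)$ is complete in $\h$. This contradicts the hypothesis that the biorthogonal of $(f_n)$ is not complete, and the proposition follows.

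No serious obstacles are expected; the only point that requires a brief justification is the preservation of the biorthogonal relation under the rescaling $f_n\mapsto \omega_n f_n$ and $g_n\mapsto \omega_n^{-1} g_n$, which is a direct computation from $\ip{f_n}{g_m}=\delta_{n,m}$. Alternatively, the same conclusion can be obtained by appealing to Corollary \ref{duality_weighted}: if $(\omega_n f_n)$ were a weighted frame, then there would exist $(h_n)\subset\h$ with $f=\sum_n\ip{f}{f_n}\omega_n h_n$ for all $f\in\h$, and testing against $f_m$ using biorthogonality would force $(\omega_n h_n)$ to coincide (after the obvious normalization) with the biorthogonal $(g_n)$ up to a scaling, again yielding completeness of $(g_n)$ by the unconditional reconstruction of every element of $\h$. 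Either route gives the contradiction cleanly.
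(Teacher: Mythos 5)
Your proof is correct and follows essentially the same route as the paper's: pass to $(\omega_n f_n)$, note it is a frame with a biorthogonal sequence and hence a Riesz basis, use that the biorthogonal of a Riesz basis is itself a Riesz basis (hence complete), and transfer completeness back to the biorthogonal of $(f_n)$ to reach a contradiction. The extra details you supply (the explicit rescaled biorthogonal $(\omega_n^{-1}g_n)$) are exactly what the paper leaves implicit.
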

\begin{proof}
	Suppose that $(f_n)$ is a weighted frame for $\h$ and a minimal sequence. In particular, this means that there exists $(\omega_n)\subset (0,+\infty)$ such that $(\omega_n f_n)$ is a Riesz basis for $\h$. Then the biorthogonal sequence to $(\omega_n f_n)$ is a Riesz basis for $\h$ too. This implies that the biorthogonal sequence to $(f_n)$ is complete in $\h$, which leads to a contradiction. 
\end{proof}

\begin{rem} 
	\label{exm_e1+en_bis}
	The sequence $(f_n)_{n=2}^\infty$ in Example \ref{exm_e1+en} is  minimal and complete in $\h$, and its biorthogonal sequence is $(e_n)_{n=2}^\infty$, which is not complete in $\h$. Hence,  Proposition \ref{weighted_biortho} gives another proof that $(f_n)_{n=2}^\infty$ is not a weighted frame for $\h$. 
\end{rem}

Further necessary conditions for the weighted frame property are given in the next proposition:

\begin{prop}
	\label{cond_weights}
	Let $(f_n)$ be a sequence in $\h$ and $(\omega_n)\subset (0,+\infty)$ be such that $(\omega_nf_n)$ is a Bessel sequence in $\h$. 
	Then 
	\begin{enumerate}
		\item $\sup_n(\omega_n \|f_n\|)<\infty$;
		\item if $\inf_n \|f_n\|>0$, then $\sup_n(\omega_n)<\infty$;  
		\item if $(f_n)$ is not a Bessel sequence, then $\inf_n(\omega_n)=0$. 
	\end{enumerate}
\end{prop}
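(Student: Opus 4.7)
The plan is to exploit directly the Bessel bound for $(\omega_n f_n)$; all three items reduce to elementary estimates once that bound is fixed. Throughout, let $B>0$ denote a Bessel bound for $(\omega_n f_n)$.

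For item 1, I would recall the well-known fact that the elements of a Bessel sequence are uniformly norm-bounded: applying the Bessel inequality with the test vector $f=\omega_n f_n$ isolates the $n$-th term of the sum, giving
\[
\|\omega_n f_n\|^4 = |\langle \omega_n f_n, \omega_n f_n\rangle|^2 \le \sum_k |\langle \omega_n f_n, \omega_k f_k\rangle|^2 \le B\,\|\omega_n f_n\|^2,
\]
so $\omega_n\|f_n\|=\|\omega_n f_n\|\le \sqrt{B}$ for all $n$, which proves $\sup_n(\omega_n\|f_n\|)<\infty$.

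For item 2, if $c:=\inf_n\|f_n\|>0$, then from item 1 one has $\omega_n = (\omega_n\|f_n\|)/\|f_n\|\le \sqrt{B}/c$, so $\sup_n\omega_n<\infty$. For item 3, I would argue by contrapositive: assume $\inf_n\omega_n=:c>0$. Then for every $f\in\h$,
\[
\sum_n |\langle f,f_n\rangle|^2 = \sum_n \omega_n^{-2}|\langle f,\omega_n f_n\rangle|^2 \le c^{-2}\sum_n |\langle f,\omega_n f_n\rangle|^2 \le \frac{B}{c^2}\|f\|^2,
\]
so $(f_n)$ would itself be Bessel; contraposing yields $\inf_n\omega_n=0$ whenever $(f_n)$ is not Bessel.

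No real obstacle is expected here; the only thing to be careful about is that in item 1 I do not divide by $0$ (if $\omega_n f_n=0$ the bound $\omega_n\|f_n\|\le\sqrt{B}$ is trivial), and that in item 3 the interchange of the weight factor with the sum is justified termwise since all weights are strictly positive.
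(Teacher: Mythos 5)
Your proof is correct and follows essentially the same route as the paper: item 1 via the uniform norm bound for elements of a Bessel sequence (you just spell out the standard test-vector argument the paper cites as well known), item 2 as an immediate consequence, and item 3 by the same weight-comparison inequality, phrased as a contrapositive rather than the paper's contradiction.
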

\begin{proof}
	\begin{enumerate}
		\item[]
		Statement 1. comes from a well known result that the elements of a Bessel sequence are norm-bounded from above. Statement 2. follows trivially from 1. 
				
		\item[3.] Assume that $(f_n)$ is not a Bessel sequence and  $\inf_n(\omega_n)=c>0$. In this case there exists $f\in \h$ such that 
		$
		\sum_{n} |\ip{f}{f_n}|^2=+\infty,$
		which leads to a contradiction:
		\[
		\infty=c^2\sum_{n} |\ip{f}{f_n}|^2\leq \sum_{n} |\ip{f}{\omega_n f_n}|^2<\infty. \qedhere
		\]
	\end{enumerate}
	
\end{proof}

\begin{rem} 
	\label{exm_e1+en_ter}
	Let us come back again to the sequence $(f_n)_{n=2}^\infty$  from Example \ref{exm_e1+en}. As it was proved above in different ways,  $(f_n)_{n=2}^\infty$ is not a weighted frame for $\h$. 
	Here, we give another proof by applying 
	Proposition \ref{cond_weights}.  Assume that $(\omega_nf_n)_{n=2}^\infty$ is a frame for $\h$ for some $(\omega_n)\subset (0,+\infty)$. Since $(f_n)_{n=2}^\infty$ is not a Bessel sequence, $\inf_n (\omega_n)_{n=2}^\infty=0$ by Proposition \ref{cond_weights}, and thus there exists a subsequence  $(\omega_{n_k})$
	 such that $\displaystyle \lim_{k\to +\infty} \omega_{n_k}=0$. This leads to the following contradiction:   
	$$
	1=\|e_{n_k}\|^2\approx \sum_{n} \omega_{n}^2|\ip{e_{n_k}}{f_{n}}|^2=\omega_{n_k}^2 \to 0 \mbox{ as $k\to \infty$}. 
	$$  
\end{rem}

\subsection{Sufficient conditions}
\begin{prop}
	\label{sub_weight}
	If a sequence $(f_n)$ contains a proper subsequence which is a weighted frame for $\h$, then  $(f_n)$ is a weighted frame for $\h$.
\end{prop}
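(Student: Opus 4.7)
The plan is to combine the existing weights on the given subsequence with weights chosen via Lemma \ref{lem_Bessel} on the complementary indices, exploiting the fact that appending a Bessel sequence to a frame preserves the frame property.

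More concretely, let $I \subset \mathbb{N}$ be the index set such that $(f_n)_{n\in I}$ is a weighted frame for $\h$, i.e. there exist $(\omega_n)_{n\in I} \subset (0,\infty)$ and constants $A, B > 0$ with
\[
A\|f\|^2 \leq \sum_{n\in I} |\langle f, \omega_n f_n\rangle|^2 \leq B\|f\|^2, \qquad \forall f\in\h.
\]
For the remaining indices, I would apply Lemma \ref{lem_Bessel} to the sequence $(f_n)_{n\notin I}$ to obtain $(\omega_n)_{n\notin I} \subset (0,\infty)$ such that $(\omega_n f_n)_{n\notin I}$ is a Bessel sequence in $\h$ with some bound $B' > 0$. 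This produces a single weight sequence $(\omega_n)_{n\in\mathbb{N}} \subset (0,\infty)$ defined piecewise.

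Then I would verify the two frame inequalities for $(\omega_n f_n)_{n\in\mathbb{N}}$: the upper bound follows by splitting the sum over $I$ and $\mathbb{N}\setminus I$, yielding the combined Bessel bound $B + B'$; the lower bound follows because the sum over all $n$ dominates the sum over $n \in I$, so the bound $A$ is preserved:
\[
\sum_{n\in\mathbb{N}} |\langle f, \omega_n f_n\rangle|^2 \geq \sum_{n\in I} |\langle f, \omega_n f_n\rangle|^2 \geq A\|f\|^2.
\]
Hence $(\omega_n f_n)_{n\in\mathbb{N}}$ is a frame for $\h$, proving that $(f_n)$ is a weighted frame.

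There is essentially no obstacle here: the argument is routine once one has Lemma \ref{lem_Bessel} available. The only point worth a brief sanity check is that the frame property is preserved under the union of a frame and a Bessel sequence, but this is immediate from additivity of the squared-modulus sums and monotonicity of the lower bound. The assumption that the subsequence is \emph{proper} is not actually needed for the proof; if the subsequence coincides with the whole sequence, the conclusion is trivially the hypothesis.
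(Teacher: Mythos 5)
Your proof is correct and follows essentially the same route as the paper: split the index set, apply Lemma \ref{lem_Bessel} to the complementary part to make it Bessel, and use that adjoining a Bessel sequence to a frame preserves the frame property. No issues.
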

\begin{proof}
	Let us assume that $(f_n)=(f_{n_k})_{k\in I}\cup (g_m)_{m\in J}$ with some (not necessarily infinite) sequence $(g_m)_{m\in J}$ in $\h$ such that $(\omega_kf_{n_k})_{k\in I}$ is a frame for $\h$ with $(\omega_k)\subset (0,+\infty)$. Then, by Lemma \ref{lem_Bessel}, there exists a sequence $(\lambda_m)_{m\in J}\subset (0,+\infty)$ such that $(\lambda_mg_m)_{m\in J}$ is a Bessel sequence. Hence, $(\omega_kf_{n_k})_{k\in I}\cup(\lambda_mg_m)_{m\in J}$ is a frame for $\h$ and $(f_n)$ is a weighted frame for $\h$.	
\end{proof}

We recall that the excess of a sequence $(f_n)$  in $\h$ is defined as
$$e((f_n))=\sup\{|J|:J\subseteq I \text{ and } \overline{\text{span}}\{f_n\}_{n\in I\backslash J}=\overline{\text{span}}\{f_n\}_{n\in I}\},$$ 
and, moreover, a frame has null excess if and only if it is a Riesz basis (see, e.g., \cite[Section 8.7]{heil})\footnote{The excess of frames have been studied also in \cite{BCHL_excess,Beric,heil2,Holub}.}. The next statement gives a description of the class of weighted frames among all sequences with finite positive access.

\begin{prop}
	If a sequence in $\h$ has finite positive excess, then it is a weighted frame for $\h$ if and only if it contains a proper subsequence which is a weighted frame for $\h$ if and only if it contains a proper subsequence which is a weighted Riesz basis for $\h$.
\end{prop}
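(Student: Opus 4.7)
The statement is a chain of equivalences: (a) $(f_n)$ is a weighted frame, (b) $(f_n)$ contains a proper subsequence that is a weighted frame, (c) $(f_n)$ contains a proper subsequence that is a weighted Riesz basis. Of these, (c)$\Rightarrow$(b) is immediate (every weighted Riesz basis is a weighted frame), and (b)$\Rightarrow$(a) is exactly Proposition \ref{sub_weight}. Thus the only real content is (a)$\Rightarrow$(c), and this is where the hypothesis of finite positive excess has to enter.

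My plan for (a)$\Rightarrow$(c) is to start with positive weights $(\omega_n)$ for which $(\omega_n f_n)$ is a frame for $\h$, and then to thin down this frame by deleting exactly the excess. First I would observe that multiplication by nonzero scalars leaves the closed linear spans of all subfamilies unchanged, so that the excess of $(\omega_n f_n)$ coincides with $e((f_n))=k$, which is finite and positive by hypothesis. Next I would invoke the standard identification of the excess of a frame with $\dim\ker D$, where $D$ denotes its synthesis operator (see \cite[Section 8.7]{heil}); this yields $\dim\ker D=k$.

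The key step is then to pick a set $J\subset\mathbb{N}$ of size $k$ for which the coordinate projection $\pi_J:\ker D\to\ell^2(J)\cong\mathbb{C}^k$ is a linear bijection. This reduces to a routine finite-dimensional argument: arrange any basis of $\ker D$ as the rows of a $k\times\infty$ matrix of rank $k$ and select $k$ columns that span $\mathbb{C}^k$. With this choice, the remaining subfamily $(\omega_n f_n)_{n\notin J}$ has synthesis operator whose kernel equals $\ker D\cap\ell^2(\mathbb{N}\setminus J)=\ker\pi_J=\{0\}$; it is therefore still a frame, now of zero excess, and hence a Riesz basis for $\h$ by the equivalence already recalled in the paper. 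Consequently $(f_n)_{n\notin J}$ is a proper subsequence of $(f_n)$ that is a weighted Riesz basis, establishing (c).

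The main obstacle is exactly this selection of $J$: one must ensure not only that $(\omega_n f_n)_{n\notin J}$ remains complete in $\h$, but that its synthesis operator becomes injective. Under the finite excess hypothesis this boils down to the clean linear-algebra statement above, whereas without finiteness of $k$ such a selection is considerably more delicate and can fail outright, which is why the hypothesis is crucial.
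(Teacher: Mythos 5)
Your reduction to the implication (a)$\Rightarrow$(c) is right: (c)$\Rightarrow$(b) is trivial, (b)$\Rightarrow$(a) is Proposition \ref{sub_weight}, and all the content sits in (a)$\Rightarrow$(c). Your first steps there (invariance of the excess under positive weights, the identification of the excess of the frame $(\omega_n f_n)$ with $\dim\ker D$, and the finite-dimensional selection of $J$ with $\pi_J:\ker D\to\mathbb{C}^J$ bijective) are all fine; note, though, that the paper disposes of this direction in one line by citing \cite[Lemma 8.42]{heil}, so what you are really doing is re-proving that deletion lemma. The gap is in the sentence asserting that since the restricted synthesis operator has trivial kernel, the subfamily ``is therefore still a frame''. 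Triviality of the kernel of the synthesis operator of a complete Bessel sequence does not imply the frame property: for an orthonormal basis $(e_n)$, the sequence $(e_n/n)$ is complete, Bessel, and has injective synthesis operator, yet it is not a frame. Injectivity is what upgrades a frame to a Riesz basis; it does not by itself produce the lower frame bound. You also flag completeness of $(\omega_n f_n)_{n\notin J}$ as something that must be ensured, but you never actually verify it.

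Both points are repaired by using the surjectivity of $\pi_J$ on $\ker D$, which your choice of $J$ also guarantees. For each $j\in J$ pick $c\in\ker D$ with $c_j=1$ and $c_i=0$ for $i\in J\setminus\{j\}$; then $0=Dc$ gives $\omega_j f_j=-\sum_{n\notin J}c_n\,\omega_n f_n$, so every deleted vector lies both in $\overline{\mathrm{span}}(\omega_n f_n)_{n\notin J}$ (whence completeness) and in the range of the restricted synthesis operator $D|_{\ell^2(\mathbb{N}\setminus J)}$. Hence that restriction is surjective onto $R(D)=\h$ as well as injective, i.e.\ a bounded bijection of $\ell^2(\mathbb{N}\setminus J)$ onto $\h$, which is bounded below by the open mapping theorem; this is exactly the Riesz basis property of $(\omega_n f_n)_{n\notin J}$. (Alternatively, invoke the standard fact that a frame with finitely many elements removed is a frame for the closed span of what remains, combine it with the completeness just shown, and conclude via zero excess.) With that one inference replaced, your argument becomes a correct, self-contained proof of the lemma the paper merely cites.
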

\begin{proof}
One of the directions follows from Proposition \ref{sub_weight}.  For the other direction, assume that $(f_n)$ is a weighted frame for $\h$ with finite positive excess. Then there exists $(\omega_n)\subset (0,+\infty)$ such that $(\omega_n f_n)$ is a frame for $\h$. Clearly, $(\omega_n f_n)$ has finite positive excess. Hence, by \cite[Lemma 8.42]{heil}, there exists a proper subsequence of $(\omega_n f_n)$ which is a Riesz basis for $\h$. Thus, $(f_n)$ contains a weighted Riesz basis.
\end{proof}

\subsection{Some characterizations of weighted frames}

Here, we consider more general operators than the analysis and synthesis ones. Given a sequence $\mathcal{F}=(f_n)$ in $\h$, we consider the operator $ \mathsf{C}_\mathcal{F}:\h \to \mathbb{C}^\mathbb{N}$ defined by $\mathsf{C}_\mathcal{F} f=(\ip{f}{f_n})$, and the operator $\mathsf{D}_\mathcal{F}:\dom(\mathsf{D}_\mathcal{F})\subseteq \mathbb{C}^\mathbb{N}\to \h$ determined by $\mathsf{D}_\mathcal{F}(c_n)=\sum_{n}c_n f_n$ for those complex sequences $(c_n)$ for which $\sum_{n}c_n f_n$ converges in $\h$.  Note that $\mathsf{C}_\mathcal{F}$ is a linear operator between vector spaces and it is equal to the analysis operator of  $(f_n)$ if and only if $(f_n)$ is a Bessel sequence. The following characterizations of the weighted frame property are inspired and based on known characterizations of the frame property \cite[Corollary 5.5.3 and Theorem 5.5.1]{ole1n}.

\begin{prop} Given a sequence $\mathcal{F}=(f_n)$ in $\h$, the following statements are equivalent.

1. $(f_n)$ 
 is a weighted frame for $\h$.

2.  There exists $(\omega_n)\subset (0,+\infty)$ such that $R(\mathsf{C}_\mathcal{F})$ is a Hilbert space with inner product $\ip{\cdot}{\cdot}_{\omega^2}$ defined by	
	$$
	\ip{(c_n)}{(d_n)}_{\omega^2} =\sum_{n}   \omega_n^2 c_n\overline{d_n},$$
and $\mathsf{C}_\mathcal{F}$ is a bounded and bijective operator between $\h$ and $R(\mathsf{C}_\mathcal{F})$.

3. There exists $(\omega_n)\subset (0,+\infty)$ such that the operator $\mathsf{D}_\mathcal{F}$ is well-defined on the Hilbert space $\ell^2_{\omega^{-2}}$ determined by	
	$$\ell^2_{\omega^{-2}}=\left \{(c_n)\in \mathbb{C}^\mathbb{N}: \sum_n \frac{|c_n|^2}{\omega_n^2}<\infty\right \}, \ \ \ 
\ip{(c_n)}{(d_n)}_{\omega^{-2}} =\sum_{n}   \frac{c_n\overline{d_n}}{\omega_n^2},
$$
	and  $R(\mathsf{D_\mathcal{F}})=\h$.
\end{prop}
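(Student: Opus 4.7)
The plan is to reduce each of the three statements to the corresponding classical characterization of the (unweighted) frame property applied to the sequence $(\omega_n f_n)$, and then transfer everything back to $(f_n)$ through the natural isometric isomorphisms between $\ell^2$ and the weighted $\ell^2$-spaces appearing in 2.\ and 3. More precisely, for a fixed weight $(\omega_n)\subset(0,\infty)$, the map $\Phi_\omega:(c_n)\mapsto(\omega_n c_n)$ is an isometric bijection from the space with inner product $\ip{\cdot}{\cdot}_{\omega^2}$ onto $\ell^2$, and its inverse is an isometric bijection from $\ell^2$ onto $\ell^2_{\omega^{-2}}$. These isomorphisms will intertwine $\mathsf{C}_{\mathcal F}$ and $\mathsf{D}_{\mathcal F}$ with the (standard) analysis and synthesis operators of $(\omega_n f_n)$.

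For $1\Leftrightarrow 2$, I would invoke \cite[Corollary 5.5.3]{ole1n}, which states that a sequence $(h_n)$ in $\h$ is a frame for $\h$ if and only if its analysis operator is a bounded bijection from $\h$ onto a closed subspace of $\ell^2$. Applied to $h_n=\omega_n f_n$, this makes $f\mapsto(\omega_n\ip{f}{f_n})$ a bounded bijection onto a closed subspace of $\ell^2$. Since $(\omega_n f_n)$ is in particular Bessel, every sequence in $R(\mathsf{C}_{\mathcal F})$ has finite $\omega^2$-weighted norm, so $\ip{\cdot}{\cdot}_{\omega^2}$ is indeed an inner product on $R(\mathsf{C}_{\mathcal F})$; the restriction of $\Phi_\omega$ to $R(\mathsf{C}_{\mathcal F})$ is then an isometric isomorphism onto the closed subspace of $\ell^2$ mentioned above, which transfers the Hilbert-space structure back. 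The identity $\|\mathsf{C}_{\mathcal F} f\|_{\omega^2}=\|C_{(\omega_n f_n)}f\|_{\ell^2}$ immediately translates boundedness and bijectivity across, yielding 2 from 1. For the converse, the same identity, together with the bounded inverse theorem applied to the bounded bijection $\mathsf{C}_{\mathcal F}:\h\to(R(\mathsf{C}_{\mathcal F}),\ip{\cdot}{\cdot}_{\omega^2})$, reproduces both frame bounds for $(\omega_n f_n)$.

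For $1\Leftrightarrow 3$, I would use \cite[Theorem 5.5.1]{ole1n}: $(h_n)$ is a frame for $\h$ if and only if its synthesis operator is a well-defined bounded surjection from $\ell^2$ onto $\h$. Taking $h_n=\omega_n f_n$, the substitution $d_n=\omega_n c_n$ shows that $\sum_n c_n(\omega_n f_n)$ converges for $(c_n)\in\ell^2$ if and only if $\sum_n d_n f_n$ converges for $(d_n)\in\ell^2_{\omega^{-2}}$, with equal sums; hence $D_{(\omega_n f_n)}$ on $\ell^2$ and the restriction of $\mathsf{D}_{\mathcal F}$ to $\ell^2_{\omega^{-2}}$ are intertwined by $\Phi_\omega^{-1}$. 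Being well-defined on the whole weighted space and surjectivity onto $\h$ then transfer in both directions. The main obstacle is only bookkeeping: verifying that the weighted inner product in 2 is finite on $R(\mathsf{C}_{\mathcal F})$ (which, in the converse direction, follows once one notes that the Bessel property of $(\omega_n f_n)$ is implicit in the bounded-bijective assumption for $\mathsf{C}_{\mathcal F}$) and checking that the Hilbert-space structures on the two sides of each isometric isomorphism genuinely agree.
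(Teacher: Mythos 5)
Your proof is correct and follows exactly the route the paper intends: the paper gives no written proof but states that these characterizations are ``inspired and based on'' \cite[Corollary 5.5.3 and Theorem 5.5.1]{ole1n}, and your reduction of statements 2 and 3 to those results for the reweighted sequence $(\omega_n f_n)$, via the isometry $(c_n)\mapsto(\omega_n c_n)$ intertwining $\mathsf{C}_\mathcal{F}$, $\mathsf{D}_\mathcal{F}$ with the standard analysis and synthesis operators of $(\omega_n f_n)$, is precisely that argument. The bookkeeping you flag (Besselness of $(\omega_n f_n)$ from boundedness of $\mathsf{C}_\mathcal{F}$ in the $\omega^2$-norm, completeness of $R(\mathsf{C}_\mathcal{F})$ transferring to closedness of the range in $\ell^2$, and the bounded inverse theorem for the lower bound) is all handled correctly.
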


\subsection{A conjecture about weighted frames and relation to unconditionally convergent multipliers}
\label{sec_multipliers}

Proposition \ref{duality_weighted} suggests the following conjecture:

\begin{conj}
	\label{conj_frames} A sequence  $(f_n)$ in $\h$ is a weighted frame for $\h$ if and only if there is a sequence $(g_n)$ so that for every $f\in\h$ one has 
	$f=\sum_n \<f,f_n\> g_n$ with unconditional convergence.
\end{conj}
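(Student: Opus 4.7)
The plan is to split the proof into its two implications and treat them separately. The forward direction is immediate from Corollary \ref{duality_weighted}: if $(\omega_n f_n)$ is a frame for $\h$ for some $(\omega_n)\subset(0,\infty)$, its canonical dual frame $(\tilde h_n)$ yields $f=\sum_n\<f,\omega_n f_n\>\tilde h_n=\sum_n\<f,f_n\>(\omega_n\tilde h_n)$ unconditionally for every $f\in\h$, so $g_n:=\omega_n\tilde h_n$ does the job.

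For the converse, the plan is to reinterpret the hypothesis as a statement about unconditionally convergent multipliers. Under the assumption, the identity operator admits the representation
$$
If=\sum_n 1\cdot\<f,f_n\>g_n,\qquad \forall f\in\h,
$$
which, in the notation of \eqref{multiplier1}, exhibits $I$ as a constant-symbol multiplier with unconditional convergence, and $I$ trivially belongs to $GL(\h)$. The plan is then to invoke Conjecture (M2) of \cite{uncconv2011}, asserting that any unconditionally convergent invertible multiplier can be rewritten as a multiplier of two frames after absorbing a weight shift $1=\omega_n\cdot\omega_n^{-1}$ into the underlying sequences. Applied here, this would produce $(\omega_n)\subset(0,\infty)$ such that $(\omega_n^{-1}f_n)$ (and simultaneously $(\omega_n g_n)$) is a frame for $\h$; in particular $(f_n)$ is a weighted frame.

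The main obstacle is that Conjecture (M2) is itself an open problem, so the above strategy yields only the implication (M2)~$\Rightarrow$~Conjecture~\ref{conj_frames}. To close the loop, I would attempt the converse implication: given an arbitrary unconditionally convergent invertible multiplier $M=M_{m,(f_n),(g_n)}$ with $M^{-1}\in\mathcal{B}(\h)$, the manipulation $f=M^{-1}Mf=\sum_n\<f,\overline{m_n}g_n\>M^{-1}f_n$ (whose unconditional convergence follows from that of the original series together with boundedness of $M^{-1}$) would let me apply Conjecture~\ref{conj_frames} to the analysis sequence $(\overline{m_n}g_n)$ to obtain a positive weighting making $(g_n)$ a frame, with a symmetric argument via $M^{*}$ handling $(f_n)$. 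The delicate technical point, which I expect to be the true obstacle, is ensuring that the two resulting weight shifts can be chosen compatibly so that their product reproduces the original symbol $m_n$, which is precisely the content of (M2). If this synthesis of weights can be carried out, Conjecture~\ref{conj_frames} becomes equivalent to (M2) and, via the (M1)$\Leftrightarrow$(M2) equivalence proved earlier in this paper, to (M1) as well, so that any unconditional proof of one of the three statements would settle all of them.
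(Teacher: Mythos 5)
The statement you were asked to prove is, in the paper, an \emph{open conjecture}: the authors prove only the ``only if'' direction (via Corollary \ref{duality_weighted}) and show that the ``if'' direction would follow from Conjecture (M2) of \cite{uncconv2011} (Proposition \ref{M2impliesNewconj}), explicitly leaving the full equivalence, and the reverse implication Conjecture \ref{conj_frames} $\Rightarrow$ (M2), unresolved. Your proposal reproduces exactly this state of affairs: your forward direction via the canonical dual of $(\omega_n f_n)$ is the paper's argument, your reduction of the converse to (M2) by viewing $I=\sum_n\<f,f_n\>g_n$ as an invertible unconditionally convergent multiplier with constant symbol is precisely Proposition \ref{M2impliesNewconj}, and you correctly identify that the remaining gap --- choosing the two weight shifts compatibly so that their product recovers the symbol --- is the genuine open problem rather than a technicality you could have closed. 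So there is nothing to correct: you have not proved the conjecture, but neither has anyone, and your honest delimitation of what is provable matches the paper's.
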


First note that the representation $f=\sum_n \<f,f_n\> g_n$ in the above conjecture can equivalently be replaced by $f=\sum_n \<f,g_n\> f_n$, because  validity of $f=\sum_n \<f,f_n\> g_n$ with unconditional convergence for every $f\in\h$ is equivalent to validity of $f=\sum_n \<f,g_n\> f_n$ with unconditional convergence for every $f\in\h$ (see \cite[Lemmas 3.1 and 2.3]{uncconv2011}). 

As it was noticed in Prop. \ref{duality_weighted}, the ``only if'' direction of Conjecture \ref{conj_frames} holds true, so only the ``if'' direction is still open. 
The ``if'' direction can be related to conjectures and results on invertible unconditionally convergent multipliers in \cite{uncconv2011}. Let us first recall that a {\it multiplier} of two sequences $\Phi=(\varphi_n)$ and $\Psi=(\psi_n)$ in $\h$ and a  symbol (complex sequence) $m=(m_n)$ is the operator $M_{m,\Phi,\Psi}$ given by
\begin{equation}\label{multiplier}
	M_{m,\Phi,\Psi}f=\sum_{n} m_n\ip{f}{\psi_n} \varphi_n 
\end{equation}
for those $f$ from $\h$ for which the series in (\ref{multiplier}) converges. Multipliers were extensively studied in \cite{B2007,BS2015,Corso_mult1,Corso_mult2,Corso_mult3,SB2012,SB2013,uncconv2011,SB2014,SB2016,SB2017,SB2020} and generalized in different ways \cite{Mult_cont,Corso_distr_mult}. 
Note that a multiplier which is defined on the entire space $\h$ is automatically bounded on $\h$ {\cite{uncconv2011}}. 
A multiplier $M_{m,\Phi,\Psi}$ is said to be {\it unconditionally convergent} if the series in \eqref{multiplier} is unconditionally convergent for every $f\in \h$. 
The multiplier of two Bessel sequences and bounded symbol belongs to $\mathcal{B}(\h)$ and it is unconditionally convergent \cite{B2007}. In \cite{uncconv2011}, a main topic was a converse question, namely, the question whether an unconditionally convergent multiplier can be written as a multiplier of two Bessel sequences and symbol (1) just by shifting of weights. This question was affirmatively answered for some classes of multipliers \cite{uncconv2011, FGP} and the general case was posed as a conjecture:

\begin{conj1}[{\cite{uncconv2011}}]
	Let $M_{m,\Phi,\Psi}$ be an unconditionally convergent multiplier. Then there exist $(\alpha_n)\subset \mc$ and $(\beta_n)\subset \mc$  such that $(\alpha_{n}\varphi_n)$ and $(\beta_{n}\psi_n)$ are Bessel sequences and $\alpha_n\overline{\beta_n}=m_n$ for all $n$.
\end{conj1}

A question in a similar spirit, related to representation of an invertible multiplier as a multiplier of frames by shifting of weights, was also investigated in \cite{uncconv2011} and it can be stated as a conjecture too:

\begin{conj2}[{\cite{uncconv2011}}]
	Let $M_{m,\Phi,\Psi}$ be an unconditionally convergent multiplier which is invertible on $\h$. Then there exist $(\alpha_n)\subset \mc$ and $(\beta_n)\subset \mc$  such that $(\alpha_{n}\varphi_n)$ and $(\beta_{n}\psi_n)$ are frames for $\h$ and $\alpha_n\overline{\beta_n}=m_n$ for all $n$. 
\end{conj2}

The two conjectures (M1) and (M2) are actually related. 
That validity of (M1) implies validity of (M2) was observed already in \cite{uncconv2011}. 
Surprisingly, the converse implication turned out to also be true.  

\begin{thm}
	Conjectures (M1) and (M2) are equivalent. 
\end{thm}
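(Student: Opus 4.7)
The plan is to handle the nontrivial direction, namely (M2) $\Rightarrow$ (M1), by embedding any unconditionally convergent multiplier into a strictly larger invertible unconditionally convergent multiplier on a doubled space and then pulling the conclusion of (M2) back down to the original sequences. Fix an unconditionally convergent multiplier $M_{m,\Phi,\Psi}$ on $\h$ with $\Phi=(\varphi_n)$ and $\Psi=(\psi_n)$, and let $(e_n)$ be an orthonormal basis of $\h$.

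On the Hilbert space $\h\oplus\h$ I would define interleaved sequences and a symbol by triples:
\begin{align*}
\widetilde\varphi_{3n-2}&=(\varphi_n,0), & \widetilde\psi_{3n-2}&=(\psi_n,0), & \widetilde m_{3n-2}&=m_n,\\
\widetilde\varphi_{3n-1}&=(0,e_n), & \widetilde\psi_{3n-1}&=(e_n,0), & \widetilde m_{3n-1}&=1,\\
\widetilde\varphi_{3n}&=(e_n,0), & \widetilde\psi_{3n}&=(0,e_n), & \widetilde m_{3n}&=1.
\end{align*}
A direct computation shows that the associated multiplier $M_{\widetilde m,\widetilde\Phi,\widetilde\Psi}$ acts as $(f,g)\mapsto(M_{m,\Phi,\Psi}f+g,\,f)$, which is a bijection of $\h\oplus\h$: the kernel is trivial since the second coordinate forces $f=0$ and then the first forces $g=0$, and any $(u,v)$ is attained by $(v,\,u-M_{m,\Phi,\Psi}v)$. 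To see that this augmented multiplier is itself unconditionally convergent, decompose the defining series by residue class modulo $3$ into three sub-series: the original $M_{m,\Phi,\Psi}f$, together with $\sum_n\langle f,e_n\rangle(0,e_n)=(0,f)$ and $\sum_n\langle g,e_n\rangle(e_n,0)=(g,0)$. Each of the three is unconditionally convergent, so by the subseries characterisation of unconditional convergence in Hilbert space the interleaved series is as well. Since the multiplier is then defined on all of $\h\oplus\h$, the automatic boundedness result cited after equation (\ref{multiplier}) places it in $GL(\h\oplus\h)$.

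Apply (M2) on $\h\oplus\h$ to obtain scalars $(\widetilde\alpha_k),(\widetilde\beta_k)\subset\mathbb{C}$ with $\widetilde\alpha_k\overline{\widetilde\beta_k}=\widetilde m_k$ and $(\widetilde\alpha_k\widetilde\varphi_k)$, $(\widetilde\beta_k\widetilde\psi_k)$ frames, and in particular Bessel sequences, in $\h\oplus\h$. Restricting to the arithmetic progression $k=3n-2$ gives Bessel subsequences $(\widetilde\alpha_{3n-2}(\varphi_n,0))_n$ and $(\widetilde\beta_{3n-2}(\psi_n,0))_n$; projection onto the first coordinate preserves the Bessel bound and yields sequences $(\widetilde\alpha_{3n-2}\varphi_n)$ and $(\widetilde\beta_{3n-2}\psi_n)$ that are Bessel in $\h$ and satisfy $\widetilde\alpha_{3n-2}\overline{\widetilde\beta_{3n-2}}=m_n$. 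Setting $\alpha_n:=\widetilde\alpha_{3n-2}$ and $\beta_n:=\widetilde\beta_{3n-2}$ gives exactly the statement of (M1).

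The only delicate point I anticipate is the verification that the augmented multiplier is genuinely unconditionally convergent as a single multiplier rather than as a formal combination of three unconditionally convergent sub-multipliers; this reduces to a bookkeeping argument based on the fact that any subset of the joint index set splits into three subsets, one in each residue class modulo $3$, so that subseries convergence of the interleaved series follows from subseries convergence of each piece. The rest of the argument is a short and essentially structural passage from the enveloping frames in $\h\oplus\h$ to Bessel sequences in $\h$.
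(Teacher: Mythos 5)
Your argument is correct, and it proves the nontrivial direction (M2) $\Rightarrow$ (M1) by a genuinely different construction than the paper's. The paper stays inside $\h$: it writes $I-M_{m,\Phi,\Psi}=M_{(1),\mathcal{F},\mathcal{G}}$ for Bessel sequences $\mathcal{F},\mathcal{G}$ (possible because an unconditionally convergent multiplier is bounded, so one may take $\mathcal{G}$ an orthonormal basis and $f_n=(I-M_{m,\Phi,\Psi})g_n$), interleaves $\Phi$ with $\mathcal{F}$ and $\Psi$ with $\mathcal{G}$ in two residue classes to get an unconditionally convergent multiplier equal to $I$, applies (M2), and reads off the odd-indexed weights. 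You instead double the space and build the companion operator $(f,g)\mapsto (M_{m,\Phi,\Psi}f+g,\,f)$ on $\h\oplus\h$ out of three interleaved residue classes; its invertibility is immediate from the explicit inverse $(u,v)\mapsto(v,\,u-M_{m,\Phi,\Psi}v)$, with no need to realize $I-M_{m,\Phi,\Psi}$ as a symbol-$(1)$ multiplier. Both proofs then rest on the same two mechanisms: a finite partition of the index set preserves unconditional convergence (your subseries bookkeeping is exactly the right justification), and a subsequence of a frame is Bessel with the weight identity surviving restriction. Your route costs slightly more bookkeeping (three classes instead of two, plus the projection onto the first coordinate, which indeed preserves the Bessel bound since $\langle (f,0),(\varphi_n,0)\rangle=\langle f,\varphi_n\rangle$) and implicitly uses that (M2), being a statement about an arbitrary separable infinite-dimensional Hilbert space, applies equally to $\h\oplus\h$; this is harmless but worth one explicit sentence. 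What it buys is that invertibility of the auxiliary multiplier is structurally obvious rather than resting on the identity $M+(I-M)=I$.
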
	
\begin{proof}	
	(M1) $\implies$ (M2) See  \cite[Section 4]{uncconv2011} for a proof. 
	
	(M2) $\implies$ (M1)  Let $M_{m,\Phi,\Psi}$ be an unconditionally convergent multiplier and thus a bounded operator on $\h$ as well. 
	Write $I-M_{m,\Phi,\Psi}=M_{(1),\mathcal{F}, \mathcal{G}}$ for some multiplier $M_{(1),\mathcal{F}, \mathcal{G}}$ with symbol $(1)$ and Bessel sequences $\mathcal{F}=(f_n)$ and $\mathcal{G}=(g_n)$ (for instance, take $\mathcal{G}$ to be an orthonormal basis for $\h$ and $f_n=(I-M_{m,\Phi,\Psi})g_n$ for every $n$). 
	Define $\Xi=(\xi_n)=(\varphi_1,f_1,\varphi_2,f_2,\dots)$, $\Theta=(\theta_n)=(\psi_1,g_1,\psi_2,g_2,\dots)$, and $m'=(m_n')=(m_1,1,m_2,1,\dots)$. 
	It is easy to see that $M_{m',\Xi,\Theta}$ is unconditionally convergent and equals to $I$. Then, if Conjecture (M2) is true, there exist $(\alpha_n)\subset \mc$ and $(\beta_n)\subset \mc$ such that  $(\alpha_n\xi_n)$ and $(\beta_n\theta_n)$ are frames for $\h$ and $\alpha_n\overline{\beta_n}=m_n'$ for every $n$, so $(\alpha_{2n-1}\varphi_n)$ and $(\beta_{2n-1}\psi_n)$ are Bessel sequences and $\alpha_{2n-1}\overline{\beta_{2n-1}}=m_{2n-1}'=m_n$ for all $n$.
\end{proof}

Let us now return to Conjecture 	\ref{conj_frames} and its relation to the above conjectures. Using the equivalence of definitions of weighted frames discussed at the beginning of Section \ref{sec4}, it is clear that validity of Conjecture (M2) implies validity of the ``if''-direction of Conjecture \ref{conj_frames} and thus 
one can state the following:

\begin{prop} \label{M2impliesNewconj}
	Conjecture (M2) implies Conjecture \ref{conj_frames}.
\end{prop}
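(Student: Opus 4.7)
The plan is to reduce the nontrivial direction of Conjecture \ref{conj_frames} to a direct application of Conjecture (M2). The ``only if'' direction of Conjecture \ref{conj_frames} has already been observed right after the statement as a consequence of Corollary \ref{duality_weighted}, so I only need to establish the ``if'' direction under the assumption that (M2) holds.

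Start from a sequence $(f_n)$ in $\h$ together with a sequence $(g_n)$ in $\h$ satisfying $f=\sum_n \<f,f_n\>g_n$ with unconditional convergence for every $f\in\h$. The key observation is that this identity presents the identity operator $I$ on $\h$ as the multiplier $M_{(1),(g_n),(f_n)}$ with constant symbol $m_n=1$: the defining series converges on all of $\h$ to $f$, the convergence is unconditional by hypothesis, and the resulting operator $I$ is trivially invertible on $\h$. Thus $M_{(1),(g_n),(f_n)}$ fits exactly the hypothesis of Conjecture (M2).

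Applying Conjecture (M2) to this multiplier yields complex sequences $(\alpha_n),(\beta_n)$ with $\alpha_n\overline{\beta_n}=1$ for every $n$ and with both $(\alpha_n g_n)$ and $(\beta_n f_n)$ being frames for $\h$. The constraint $\alpha_n\overline{\beta_n}=1$ forces $\beta_n\neq 0$, so I may set $\omega_n:=|\beta_n|>0$. Since $|\<f,\beta_n f_n\>|^2=\omega_n^2|\<f,f_n\>|^2$, the frame inequalities for $(\beta_n f_n)$ transfer verbatim to $(\omega_n f_n)$; hence $(\omega_n f_n)$ is a frame for $\h$, and so $(f_n)$ is a weighted frame for $\h$ in the sense of Definition \ref{def_wf}. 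The passage from complex weights $\beta_n$ to positive weights $\omega_n=|\beta_n|$ is exactly the equivalence of definitions recalled at the beginning of Section \ref{sec4}.

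There is essentially no technical obstacle here, and the argument needs no prior preparation beyond recognizing that an unconditional decomposition $f=\sum_n\<f,f_n\>g_n$ is \emph{ipso facto} an unconditionally convergent multiplier with constant symbol $1$, to which (M2) directly applies. The only small care is to note that the output weights of (M2) are complex, which is harmless thanks to the equivalent formulation of Definition \ref{def_wf} discussed earlier.
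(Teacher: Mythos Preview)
Your argument is correct and is exactly the approach the paper has in mind: the paper merely asserts that the implication is ``clear'' once one views the unconditional decomposition $f=\sum_n\<f,f_n\>g_n$ as the unconditionally convergent invertible multiplier $M_{(1),(g_n),(f_n)}=I$, applies (M2), and then passes from complex to positive weights via the equivalence of definitions discussed at the beginning of Section~\ref{sec4}. You have simply written out those details explicitly.
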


It is still an open question whether the converse holds, i.e., whether validity of Conjecture \ref{conj_frames} implies validity of Conjecture (M2).

We end the section with some classes of sequences for which Conjecture  \ref{conj_frames} is true.

\begin{prop} Let $(f_n)$ and $(g_n)$ be sequences in $\h$ such that $\inf_{n} \|f_n\|\|g_n\|>0$ or $(g_n)$ is minimal or $(f_n)$ is minimal. The following statements are equivalent. 
	
	1. For every $f\in \h$, $
	f=\sum \limits_n \< f , f_n \>  g_n
	$
	with unconditional convergence. 
	
	2.  
	$(\|g_n\| f_n)$ and $(\|g_n\|^{-1} g_n)$ are dual frames for $\h$.
	
	3.  
	$(\|f_n\| g_n)$ and $(\|f_n\|^{-1} f_n)$ are dual frames for $\h$.
	
\end{prop}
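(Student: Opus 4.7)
The implications $2.\Rightarrow 1.$ and $3.\Rightarrow 1.$ are immediate, since a dual-frame expansion always converges unconditionally: $(\|g_n\|f_n)$ and $(\|g_n\|^{-1}g_n)$ being dual frames yields $f=\sum_n \<f, \|g_n\|f_n\> \|g_n\|^{-1}g_n = \sum_n \<f, f_n\> g_n$ for every $f\in\h$, and similarly for $3.$ The substantive content is $1.\Rightarrow 2.$; then $1.\Rightarrow 3.$ follows by a parallel argument with the roles of the two weight families interchanged.

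For $1.\Rightarrow 2.$, I would proceed in three stages. \emph{Stage 1 (weighted Bessel bounds).} Since $\sum_n\<f,f_n\>g_n$ converges unconditionally for every $f\in\h$, standard uses of Banach--Steinhaus yield a constant $K>0$ with $\bigl\|\sum_n c_n\<f,f_n\>g_n\bigr\|\le K\|c\|_\infty\|f\|$ for all $c\in\ell^\infty$ and $f\in\h$. Specialising to signs $c_n=\epsilon_n\in\{\pm 1\}$ supported on a finite set $F$ and averaging over Rademacher variables produces
\[
\sum_{n\in F}|\<f,f_n\>|^2\|g_n\|^2 \;=\; \mathbb{E}\Bigl\|\sum_{n\in F}\epsilon_n\<f,f_n\> g_n\Bigr\|^2 \;\le\; K^2\|f\|^2,
\]
so $(\|g_n\|f_n)$ is Bessel. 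Since by \cite[Lemmas 2.3 and 3.1]{uncconv2011} the dual identity $f=\sum_n\<f,g_n\>f_n$ also converges unconditionally for every $f$, the same argument shows that $(\|f_n\|g_n)$ is Bessel.

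\emph{Stage 2 ($\inf_n\|f_n\|\|g_n\|>0$).} The first hypothesis gives this directly. In the minimal cases, the expansion $f=\sum_n\<f,f_n\>g_n$ forces $(g_n)$ to be complete in $\h$; combined with minimality of $(g_n)$ this provides a unique biorthogonal sequence $(h_n)\subset\h$, and pairing both sides of the expansion with $h_m$ (continuity of the inner product suffices to swap with the series) yields $\<f,h_m\>=\<f,f_m\>$ for every $f$, hence $f_m=h_m$. Thus $(f_n)$ is biorthogonal to $(g_n)$, so $\|f_n\|\|g_n\|\ge|\<f_n,g_n\>|=1$. The case of $(f_n)$ minimal is symmetric via the dual expansion.

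\emph{Stage 3 (frame property).} Setting $c:=\inf_n\|f_n\|\|g_n\|>0$ and using the Bessel bound $B$ on $(\|f_n\|g_n)$ gives
\[
\sum_n\frac{|\<f,g_n\>|^2}{\|g_n\|^2}\;\le\;\frac{1}{c^2}\sum_n|\<f,g_n\>|^2\|f_n\|^2\;\le\;\frac{B}{c^2}\|f\|^2,
\]
so $(\|g_n\|^{-1}g_n)$ is Bessel. The identity $f=\sum_n\<f,\|g_n\|f_n\>\|g_n\|^{-1}g_n$ together with Cauchy--Schwarz (as in the proof of Proposition \ref{pro_dualities}) converts the Bessel bound on one of the two sequences into a lower frame bound for the other, so both are frames and are dual by construction, establishing $2.$ The main obstacle I anticipate is Stage~2: unwinding the interplay between minimality, completeness, and unconditional convergence to pin down the biorthogonal sequence; once that is done, the remaining work is essentially routine.
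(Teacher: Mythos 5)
Your proof is correct and follows essentially the same route as the paper: reduce the minimal cases to the case $\inf_n\|f_n\|\|g_n\|>0$ via the biorthogonality argument, extract the weighted Bessel bounds from unconditional convergence, and conclude with the standard Cauchy--Schwarz lemma for dual frames. The only difference is that you re-derive from scratch (via Banach--Steinhaus and Rademacher averaging, and via the biorthogonal-sequence computation) the two ingredients that the paper simply cites from the multiplier literature, namely \cite[Corollary 3.6]{uncconv2011} and the proof of \cite[Proposition 4.8]{uncconv2011}.
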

\begin{proof}
	The implication $2. \Rightarrow 1.$ (resp. $3. \Rightarrow 1.$) follows from well known results in frame theory for any two sequences $(f_n)$ and $(g_n)$ with $g_n\neq 0$ for every $n$ (resp.  $(g_n)$ and $(f_n)$ with $f_n\neq 0$ for every $n$). 
	
	For the converse implications, assume that 1. holds.  
	If $\inf_{n} \|f_n\|\|g_n\|>0$, then  \cite[Corollary 3.6]{uncconv2011} implies that  	$(\|g_n\| f_n)$ and $(\|g_n\|^{-1} g_n)$  are Bessel sequences and then   \cite[Lemma 6.3.2]{ole1n} completes the proof that 2. holds. 
	If $(g_n)$ is minimal, then the proof of \cite[Proposition 4.8]{uncconv2011} implies that $\inf_{n} \|f_n\|\|g_n\|>0$  
and 
thus 2. holds by what is already proved. 
As mentioned at the beginning of Section \ref{sec_multipliers}, statement 1. is equivalent to validity of  
	$	f=\sum_n \< f , g_n \>  f_n	$
	with unconditional convergence for every $f\in\h$. Thus, 3. holds again from what is already proved. Finally, if $(f_n)$ is minimal, then 2. and 3. hold with similar arguments. 
\end{proof}

\section*{Acknowledgments} The first author thanks M. Speckbacher and D. Freeman for related discussions in the course of preparing ``Quantitative bounds for unconditional pairs of frames'', P. Balazs, D. Freeman, R. Popescu, M. Speckbacher, arXiv:2212.00947, and acknowledges the support by the project P 34624 ``Localized, Fusion and Tensors of Frames” (LoFT) of the Austrian Science Fund (FWF).  The second author thanks P. Balazs and D. Stoeva for the hospitality at the Acoustics Research Institute in 2019 and acknowledges partial supports by the Università degli Studi di Palermo (through FFR2023 ``Corso'') and by the ``Gruppo Nazionale per l'Analisi Matematica, la Probabilità e le loro Applicazioni'' (INdAM). The third author acknowledges support from the Austrian Science Fund (FWF) through Project P 35846-N ``Challenges in Frame Multiplier Theory''.


\begin{thebibliography}{1}
	
\bibitem{AB}
J.-P. Antoine, P. Balazs, {Frames and semi-frames}, {\it J. Phys. A: Math. Theor.} 44, 205201 (2011); Corrigendum 44,	(2011), 479501.
		
\bibitem{AB2}
J.-P. Antoine, P. Balazs, {Frames, semi-frames, and Hilbert scales}, {\it Numer. Funct. Anal. Optim.} 33, (2012), 736–769.
				
				
\bibitem{Beric}
D. Baki\'c, T. Beri\'c, On excesses of frames, {\it Glas. Mat.}, 50, (2015), 415–427.	


\bibitem{BCHL_excess}
R. Balan, P. Casazza, C. Heil, Z. Landau, Deficits and excesses of frames, {\it  Adv. Comput. Math.} 18, (2003), 93–116. 

\bibitem{B2007}
P. Balazs, Basic definition and properties of Bessel multipliers, {\it  J. Math. Anal. Appl.}, 325 (1), (2007), 571–585.

\bibitem{xxljpa1}
P.~Balazs, J.-P. Antoine,  A.~Grybos.
\newblock Weighted and controlled frames: Mutual relationship and first
  numerical properties. \newblock {\it Int. J. Wavelets Multiresolut. Inf. Process.}, 8(1), (2010), 109--132.
 

\bibitem{Mult_cont}
P. Balazs, D. Bayer, A. Rahimi, {Multipliers for continuous frames in Hilbert spaces}, {\it J. Phys. A: Math. Theor.}, 45(24), (2012), 244023.

\bibitem{bs}
P. Balazs, M. Shamsabadi, An unbounded operator theory approach to lower frame sequence and Riesz-Fisher sequences, preprint, https://arxiv.org/abs/2310.02088. 

\bibitem{speckB17}
P. Balazs, M. Speckbacher, {Reproducing pairs and Gabor systems at critical density}, {\it J. Math. Anal. Appl.} 455(2), (2017), 1072–1087.

\bibitem{BS2015}
P. Balazs, D.\,T. Stoeva, Representation of the inverse of a frame multiplier, {\it J. Math. Anal. Appl.}, 422(2), (2015), 981–994.

\bibitem{ABS}
P. Balazs, D.\,T. Stoeva, J.-P. Antoine, {Classification of general sequences by frame related operators}, {\it Sampl. Theory Signal Image Process.} 10,  (2011), 151–170.

\bibitem{BC}	
G. Bellomonte, R. Corso, Frames and weak frames for unbounded operators, {\it Adv. Comput. Math.}, 46(2), 38, (2020), 21pp.

\bibitem{BR}
F. J. Beutler, W. L. Root, The operator pseudo-inverse in control and systems identifications, In:  {\it Generalized Inverses and Applications}, M. Z. Nashed (ed.), Academic Press, 1976, 397--494.

\bibitem{BVAJM}
I. Bogdanova, P. Vandergheynst, J.-P. Antoine, L. Jacques, M. Morvidone, Stereographic
wavelet frames on the sphere, {\it Appl. Comput. Harmon. Anal.}, 19, (2005), 223–252.


\bibitem{CCLL02}
P.G. Casazza, O. Christensen, S. Li, A. Linder. Riesz-Fischer Sequences and Lower Frame Bounds, {\it Z. Anal. Anwend.} {21}(2), (2002),  305--314. 

\bibitem{Ole1995}
O.~Christensen, Frames and Pseudo-inverses, {\it J. Math. Anal. Appl.} {195}, (1995),
401--414.

\bibitem{ole1n}
O.~Christensen,
\newblock {\it {A}n {I}ntroduction to {F}rames and {R}iesz {B}ases}. Second edition. 
\newblock Birkh{\"a}user, 2016.

\bibitem{Corso_seq}
R. Corso, {Sesquilinear forms associated to sequences on Hilbert spaces}, {\it Monatsh. Math.}, {189}(4), (2019), 625--650.

\bibitem{Corso_distr_mult}
R. Corso, F. Tschinke, Some notes about distribution frame multipliers, in: {\it Landscapes of Time-Frequency Analysis, vol. 2}, P. Boggiatto, T. Bruno, E. Cordero, H.G. Feichtinger, F. Nicola, A. Oliaro, A. Tabacco, M. Vallarino (Ed.), Appl. Numer. Harmon. Anal. Series, Springer,  2020.

\bibitem{Corso_mult1}
R. Corso,  On some dual frames multipliers with at most countable spectra, {\it Ann. Mat. Pura Appl.}, 201(4),  (2022), 1705--1716.

\bibitem{Corso_mult2}
R. Corso, Localization of the spectra of dual frames multipliers, {\it Constructive Mathematical Analysis}, 5(4), (2022), 238--245.

\bibitem{Corso_seq2}
R. Corso, {Generalized frame operator, lower  semi-frames and sequences of translates}, {\it Math. Nachr.}, 296(7), (2023), 2715--2733.

\bibitem{Corso_mult3}
R. Corso, {Estimate of the spectral radii of Bessel multipliers and consequences}, {\it Constructive Mathematical Analysis}, 6(3), (2023), 176--183.

\bibitem{DS} J. Duffin, A.C. Schaeffer, {A class of nonharmonic Fourier series}, {\it Trans. Amer. Math. Soc.} {72} (1952), 341-366.

\bibitem{FGP} C. Fern\'andez, A. Galbis, E. Primo,
Unconditionally convergent multipliers and Bessel sequences,
 {\it J. Math. Anal. Appl.} {455}(1), (2017), 389--395.

\bibitem{heil}
C.~Heil, {\it A Basis Theory Primer}, Expanded Edition,  Birkh{\"a}user, 2010.

\bibitem{heil2}
C.~Heil, P.T. Yu, Convergence of Frame Series, {\it J. Fourier Anal. Appl.}, 29, (2023), 14.

\bibitem{Holub}
J. Holub, Pre-frame operators, Besselian frames and near-Riesz
	bases, {\it Proc. Am. Math. Soc.} 122, (1994), 779–785.

\bibitem{Jacques} 
L. Jacques, Ondelettes, repères et couronne solaire, Thèse de Doctorat, Univ. Cath. Louvain, Louvain-la-Neuve, 2004.

\bibitem{Kato} 
T. Kato, 
{\it Perturbation theory for linear operators}, 
Springer Berlin, Heidelberg, 1966.


\bibitem{kopt}
G. Kutyniok, K.A. Okoudjou, F. Phillip, E.K. Tuley, Scalable frames, {\it Linear Algebra Appl.} 438(5), (2013), 2225-2238.


\bibitem{spexxl14}
M.~Speckbacher, P.~Balazs.
\newblock Reproducing pairs and the continuous nonstationary {G}abor transform
  on lca groups.
\newblock {\it J. Phys. A},
(2015), 48:395201.

\bibitem{S05}
D.\,T. Stoeva. Connection between the lower p-frame condition and existence of reconstruction formulas in a Banach space and its dual. {\it Ann. Sof. Univ., Fac. Math. and Inf.} { 97}, (2005),  123-133.

\bibitem{Sd}
D.\,T. Stoeva. Characterization of atomic decompositions,
Banach frames, Xd-frames, duals and
synthesis-pseudo-duals, with application to
Hilbert frame theory. arXiv:1108.6282

\bibitem{S2020}
 D.\,T. Stoeva. On a Characterization of Riesz Bases via Biorthogonal Sequences. {\it J. Fourier Anal. Appl.} 26, (2020), 67.

\bibitem{SB2012}
D.\,T. Stoeva, P. Balazs,  Invertibility of multipliers, {\it Appl. Comput. Harmon. Anal.}, 33(2), (2012), 292--299.

\bibitem{SB2013}
D.\,T. Stoeva, P. Balazs,  Detailed characterization of conditions for the unconditional convergence and invertibility of multipliers, {\it Sampl. Theory Signal Image Process.}, 12(2-3), (2013), 87--125.

\bibitem{uncconv2011}
D.\,T. Stoeva, P.~Balazs.
\newblock Canonical forms of unconditionally convergent multipliers.
\newblock {\it J. Math. Anal. Appl.}, 399, (2013), 252--259.

\bibitem{SB2014}	
D.\,T. Stoeva, P. Balazs, Riesz bases multipliers, In M. Cepedello Boiso, H. Hedenmalm,	M. A. Kaashoek, A. Montes-Rodr\'{i}guez, and S. Treil, editors, {\it  Concrete Operators, Spectral Theory, Operators in Harmonic Analysis and Approximation}, vol 236, Oper. Theory Adv. Appl., 475-482. Birkhäuser, Springer Basel, 2014.

\bibitem{SB2016}	
D.\,T. Stoeva, P. Balazs, On the dual frame induced by an invertible frame multiplier, {\it Sampl. Theory in Signal and Image Process.}, 15, (2016), 119--130.

\bibitem{SB2017}	
D.\,T. Stoeva, P. Balazs, Commutative properties of invertible multipliers in relation to representation of their inverses, In {\it Sampling Theory and Applications (SampTA)}, 2017 International Conference on, 288--293. IEEE, 2017.

\bibitem{SB2020}
D.\,T. Stoeva, P. Balazs,  A survey on the unconditional convergence and the invertibility of multipliers with implementation, In: {\it Sampling - Theory and Applications (A Centennial Celebration of Claude Shannon)}, S. D. Casey, K. Okoudjou, M. Robinson, B. Sadler (Ed.), Appl. Numer. Harmon. Anal. Series, Springer,  2020.

\end{thebibliography}

\end{document}